\documentclass[12pt]{amsart}

\usepackage{amsmath, amsfonts, amssymb,amsthm}
\usepackage{euscript}
\usepackage[T1]{fontenc}
\usepackage{graphicx}
\usepackage{tikz-cd}
\usepackage{color}

\usepackage{hyperref}

\usepackage{comment}

\newtheorem{thm}{Theorem}[section]
\newtheorem{lem}[thm]{Lemma}

\newtheorem{prop}[thm]{Proposition}
\newtheorem{cor}[thm]{Corollary}

\theoremstyle{definition}
\newtheorem{defn}[thm]{Definition}

\newtheorem{ex}[thm]{Example}
\newtheorem{rem}[thm]{Remark}

\DeclareMathOperator{\R}{\mathbb R}

\DeclareMathOperator{\Pol}{\mathcal P}

\DeclareMathOperator{\NN}{\mathbb N}
\DeclareMathOperator{\QQ}{\mathbb Q}
\DeclareMathOperator{\CC}{\mathbb C}

\DeclareMathOperator{\RSp}{R-Spec}

\DeclareMathOperator{\Sp}{Spec}
\DeclareMathOperator{\Spr}{Spec_r}
\DeclareMathOperator{\ReSp}{R-Spec}

\def \S {{\mathcal S}}

\DeclareMathOperator{\p}{\mathfrak{p}}
\DeclareMathOperator{\q}{{\mathfrak q}}

\DeclareMathOperator{\BAB}{B\otimes_{A} B}

\def \S {{\mathcal S}}

\def \RR {{\mathbb R}}

\def \Na {{\mathcal N}}

\DeclareMathOperator{\GM}{\mathcal{G}}

\title{Real radiciality and monoreal extensions}
\author{Goulwen Fichou, Jean-Philippe Monnier and Ronan Quarez}
\thanks{The authors wish to thank Michel Coste for interesting discussions. They have received support from the project NewMIRAGE n°ANR-23-CE40-0002-01.}

\subjclass[2020]{14P99,13B22,12D99}
\keywords{radiciality, real closed fields, real spectrum}

\begin{document}
\maketitle

\begin{abstract} 
We study irreducible polynomials admitting a single real root in any real closed field extension of the base field, called monoreal polynomials. 
We show some stability properties satisfied by the induced monoreal field extensions, and define the monoreal closure of a field. We make the link with a notion of real radiciality for ring extensions, and an injectivity property  at the real spectrum level.
We end with a geometric application, showing that injectivity implies surjectivity for the real spectrum mapping, under certain assumptions.
\end{abstract}

\section*{Introduction}
\subsection*{Motivations}
An algebraic field extension $K\to L$ is said to be purely inseparable  if for every $a\in {\displaystyle L\setminus K}$, the minimal polynomial $m_a$ of $a$ over $K$ is not a separable polynomial (its roots in an algebraic closure are not distinct). This can occur (for a non trivial extension) only when $K$ has positive characteristic $p$. In that case $m_a(x)=x^q-k$, with $k\in K$ and $q$ a power of $p$.
Any algebraic extension $K\to L$ can be uniquely factorized as $K\to L^s\to L$ where $K\to L^s$ is separable and 
$L^s\to L$ is purely inseparable. It appears that, when $K\to L$ is finite, the degree $[L^s:K]$ is equal to the number of $K$-morphisms from $L$ into an algebraic closure of $K$.
As a consequence, the extension $K\to L$ is purely inseparable if and only if any morphism from $K$ into an algebraically closed field $C$ admits exactly one lifting to $L$.

In this paper, we develop an analogue of pure inseparability in the context of real algebra.
Of course, since formally real fields (those fields which admit an ordering) have characteristic zero, such field extensions are separable. Nevertheless, one may study a kind of real version of inseparability, by  considering morphisms into real closed fields. Namely, we call an extension $K\to L$ monoreal if any morphism from $K$ into a real closed field $R$ admits exactly one lifting to $L$.

Purely inseparable extensions are sometimes called radicial extensions. In the context of ring extensions, a radicial extension induces a bijection at the spectrum level, cf. \cite{Gr1}. Hence, one may be interested in a real counterpart with real radicial extensions. We will see that in the real context, injectivity replaces naturally bijectivity.

Our initial interest in developing these notions comes from a geometric motivation: given a real algebraic variety $V$, it is natural to wonder whether there exists a biggest real algebraic variety $W$ between $V$ and its normalization which induces a bijection at the level of real points. For algebraically closed fields, the seminormalization answers this question. However this is hopeless in real algebraic geometry, as proved in \cite{FMQsemi} already for curves, due to the fact that real points may disappear as complex points, preventing surjectivity in general.  Therefore it is much more natural to investigate injectivity in the real framework. This motivation leads us to  study algebraic conditions ensuring that a ring extension $A\to B$ induces an injective map $\Spr B\to \Spr A$ at the real spectrum level (a refinement of the classical spectrum taking into account orderings on residue fields).

\subsection*{Content}
We say that a finite field extension $K\to L$ between formally real fields is monoreal if the minimal polynomial of a primitive element admits exactly one (real) root in any real closed field extension of $K$. Equivalently it means that any morphism $K\to R$ into a real closed field admits exactly one lifting to $L$. In some sense, monoreal polynomials are the opposite of  so-called hyperbolic polynomials (sometimes called also real zeros polynomials).
In the case of number fields, i.e. when $K=\QQ$, the notion of monoreal extension coincides with field extension of signature of type $(1,s)$, which means that the minimal polynomial has $1$ real root and $s$ couples of non real complex conjugate roots. 
We extend our framework of monoreality to the case of algebraic extensions.
In section 1, we show that monoreal extensions are stable under composition and restriction. Then, we show the existence and unicity of monoreal closures 
and we propose a geometric description of the monoreal closure of the field $\R(t)$.
In section 2, we give a characterization of monoreality in term of bijectivity of the induced mapping at the real spectrum level. This characterization is very natural since points in $\Sp_r K$ can be viewed as morphisms from $K$ into real closed fields.

Following our geometric motivation exposed upper, we drop, in section \ref{sec-sper}, the existence condition of the liftings in monoreality, and consider rather extensions $K\to L$ such that any morphism
$K\to R$ into a real closed field $R$ admits at most one lifting. This condition is called real radiciality. It can be characterized by the injectivity of the induced map $\Spr L\to\Spr K$. 
We show that finite real radicial extensions have either odd degrees and are monoreal, or have even degree and $L$ is not formally real.

In section \ref{sec-ring}, we define real radiciality for rings extensions, as a generalization of radiciality introduced in \cite{Gr1}.
We characterized it, in Theorem \ref{prop3}, by the injectivity at the real spectrum level. It is natural to wonder whether this real radiciality condition is related to injectivity for the real part of the classical spectrum, namely  for real prime ideals. If false in general, it happens to be true in the geometric context, cf. Theorem \ref{last-thm}.


\section{Monoreal field extensions}

\subsection{Monoreal extensions}\label{sect-mono}
In all this section, 
$K$ stands for a formally real field.

\begin{defn} 
	We say that an irreducible polynomial $P\in K[X]$ is monoreal if it admits exactly one root in any real closed field extension of $K$.
	
	For an algebraic extension $K\to L$, we say that $a\in L$ is monoreal over $K$ if its minimal polynomial over $K$ is monoreal. The extension $K\to L$ is monoreal if any $a\in L$ is monoreal over $K$.	
\end{defn}	

The trivial extension of a formally real field is monoreal. Note also that the degree of a monoreal polynomial is odd, since if it were even it would admit an even number of real roots (complex roots are conjugate).

We emphasize that, in the case of number fields, the notion of monoreal extension coincides with extension of signature of the type $(1,s)$. Indeed, recall from algebraic number theory that a finite extension $L$ of $\QQ$ is said of signature $(r,s)$ when the minimal polynomial of a primitive element has $r$ real roots and $s$ couples of non real complex conjugate roots. This signature appears in some important results such as Dirichlet's Unit Theorem.

The following elementary criterion of monoreality will be very useful in the paper.

\begin{prop}\label{CriterionFiniteMono}
An algebraic element $a$ is monoreal over $K$ if and only if any morphism $K\to R$ into a real closed field $R$ admits exactly one lifting to $K(a)$. 
\end{prop}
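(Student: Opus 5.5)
The plan is to reduce both conditions to counting roots of the minimal polynomial. Let $P=m_a\in K[X]$ be the minimal polynomial of $a$, so that $K(a)\simeq K[X]/(P)$. For a morphism $\varphi:K\to R$ into a real closed field, write $P^\varphi\in R[X]$ for the image of $P$. By the universal property of $K[X]/(P)$, liftings $\psi:K(a)\to R$ of $\varphi$ correspond bijectively to roots of $P^\varphi$ in $R$, via $\psi\mapsto\psi(a)$. The condition ``exactly one lifting'' for $\varphi$ is therefore the same as ``$P^\varphi$ has exactly one root in $R$''.

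For ($\Leftarrow$), let $R$ be a real closed field extension of $K$; the inclusion $K\to R$ is a morphism, so it has exactly one lifting, and $P$ has exactly one root in $R$. Hence $P$ is monoreal. For ($\Rightarrow$), fix $\varphi:K\to R$. Since $K$ is a field, $\varphi$ is injective, and $R$ is a real closed field extension of $K$ once we identify $K$ with $\varphi(K)$. Under this identification $P^\varphi$ is just $P$ viewed in $R[X]$, and monoreality of $P$ says it has exactly one root in $R$. So $\varphi$ has exactly one lifting.

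The only point that needs care is this identification in ($\Rightarrow$). One has to make sure that the definition's phrase ``real closed field extension of $K$'' covers an arbitrary embedding $\varphi:K\to R$, not only $K$ sitting inside $R$ in some fixed way. This follows by transport of structure: replace $R$ by an isomorphic copy $R'\supseteq K$ with $\varphi$ becoming the inclusion. Root counts of $P^\varphi$ in $R$ and of $P$ in $R'$ agree, and irreducibility of $P$ over $K$ is untouched. I expect no other obstacle. Beyond the universal property of the quotient ring, nothing is needed: no orderings, and no earlier result of the paper.
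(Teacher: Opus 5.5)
Your proof is correct and follows the paper's own argument. The paper justifies the criterion solely by the bijection between liftings of $\phi$ to $K(a)\simeq K[X]/(m_a)$ and the roots of $m_a^{\phi}$ in $R$; you make this explicit and add the harmless identification of an arbitrary embedding $K\to R$ with a real closed field extension of $K$.
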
	

It relies on the one-to-one correspondence between  liftings of $\phi$ to $K(a)$ and  real roots of  the minimal polynomial $m_a$ of $a$ in $R$.

Since finite extensions are generated by a single element, the following result is natural:

\begin{prop}\label{MonorealCriterion}
The extension $K\to K(a)$ is monoreal if and only if $a$ is monoreal over $K$.
\end{prop}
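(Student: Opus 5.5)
One direction is immediate: if $K\to K(a)$ is monoreal, then in particular $a\in K(a)$ is monoreal over $K$ by definition. For the converse, assume $a$ is monoreal over $K$ and let $b\in K(a)$. I will show that $b$ is monoreal over $K$ by means of Proposition \ref{CriterionFiniteMono}. So I fix a morphism $\phi:K\to R$ into a real closed field $R$ and aim to prove that $\phi$ has exactly one lifting to $K(b)$.

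\emph{Existence.} By Proposition \ref{CriterionFiniteMono} applied to $a$, $\phi$ admits a (unique) lifting $\psi:K(a)\to R$. Since $K\subset K(b)\subset K(a)$, the restriction $\psi_{|K(b)}$ is a lifting of $\phi$ to $K(b)$.

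\emph{Uniqueness.} This is the only real step. Let $\chi:K(b)\to R$ be any lifting of $\phi$. I want to extend $\chi$ to $K(a)$ with values in $R$. Observe that $K(a)$ is a finite extension of $K(b)$, generated by $a$, and that the minimal polynomial $m$ of $a$ over $K(b)$ divides $m_a$ in $K(b)[X]$. Its image $m^\chi$ divides $m_a^\phi$ in $R[X]$. Since $\deg m_a$ is odd and every field in sight is formally real, the key claim is that $m^\chi$ has a root in $R$. Granting this, we obtain an extension $\tilde\chi:K(a)\to R$ of $\chi$, which is again a lifting of $\phi$. By uniqueness of the lifting of $\phi$ to $K(a)$ we get $\tilde\chi=\psi$, and hence $\chi=\psi_{|K(b)}$.

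\emph{The obstacle.} The difficulty is proving that $m^\chi$ has a root in $R$. I plan to use the tower law: $[K(a):K]=[K(a):K(b)]\,[K(b):K]$. Here $[K(a):K]=\deg m_a$ is odd, so $\deg m=[K(a):K(b)]$ is odd as well. A polynomial of odd degree over the real closed field $R$ always has a root in $R$, and this finishes the argument. As a consistency check, the existence step together with the tower law also gives that $[K(b):K]$ is odd, which is compatible with $b$ being monoreal.
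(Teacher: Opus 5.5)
Your proposal is correct and follows essentially the same argument as the paper. Existence comes from restricting the lifting from $K(a)$ to $K(b)$. For uniqueness, the tower law makes $[K(a):K(b)]$ odd, so every lifting to $K(b)$ extends to $K(a)$, and uniqueness for $a$ then applies (the paper phrases this as two liftings $\phi_1,\phi_2$ both extending and hence coinciding).
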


\begin{proof}
Let us consider a subextension	$K\to K(b) \to K(a)$. Assume that $a$ is monoreal and let us show that $b$ is so.
Consider a morphism $\phi: K\to R$ into a real closed field.
First, there is a lifting of $\phi$ to $K(a)$ and hence, by restriction, to $K(b)$.
Second, let us assume that one has two liftings $\phi_1,\phi_2$ of $\phi$ to $K(b)$. Since the degree of $K\to K(a)$ is odd, this is also the case for the degree of $K(b)\to K(a)$ and hence, $\phi_1,\phi_2$ lift to $K(a)$. Since $a$ is monoreal one gets $\phi_1=\phi_2$, and we are done.
\end{proof}

In the case of algebraic extensions, we no longer have a criterion similar to Proposition \ref{CriterionFiniteMono}~: only one implication remains valid. 
\begin{prop}\label{LiftingAlgebraic}
	If $K\to L$ is monoreal, then any morphism $K\to R$ into a real closed field $R$ admits exactly one lifting to $L$. 
\end{prop}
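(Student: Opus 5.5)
Fix a morphism $\phi: K\to R$ into a real closed field $R$. The plan is to prove uniqueness and existence separately: uniqueness is local, existence needs a compactness or Zorn argument.

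\emph{Uniqueness.} Let $\psi_1,\psi_2: L\to R$ be two liftings of $\phi$ and take $a\in L$. The restrictions of $\psi_1,\psi_2$ to $K(a)$ are liftings of $\phi$ to $K(a)$. Since $a$ is monoreal over $K$, Proposition~\ref{CriterionFiniteMono} says there is exactly one such lifting. Hence $\psi_1(a)=\psi_2(a)$, and so $\psi_1=\psi_2$.

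\emph{Existence.} I would apply Zorn's lemma to the set of pairs $(M,\psi)$, where $K\subseteq M\subseteq L$ is an intermediate field and $\psi: M\to R$ is a morphism extending $\phi$. Order the pairs by extension. A chain has an upper bound given by the union, so there is a maximal pair $(M,\psi)$.

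Suppose $M\neq L$ and pick $a\in L\setminus M$. We want to extend $\psi$ to $M(a)$. The key point is that $M\to L$ is again monoreal, which I would prove as follows. Take $b\in L$ and a morphism $\chi: M\to R'$ into a real closed field. Then $b$ lies in $K(c_1,\dots,c_n,b)$, where the $c_i\in M$ are the coefficients of the minimal polynomial of $b$ over $M$. By the primitive element theorem this field is $K(d)$ for some $d\in L$, and $K(d)$ is monoreal over $K$ by Proposition~\ref{MonorealCriterion}.

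Now set $M_0=K(c_1,\dots,c_n)$, so that $K\to M_0\to M_0(b)=K(d)$. By Proposition~\ref{CriterionFiniteMono} applied to $d$, the restriction $\chi|_K$ has exactly one lifting to $K(d)$. Moreover $[K(d):K]$ is odd, so $[K(d):M_0]$ is odd. Hence every morphism $M_0\to R'$ lifts to $K(d)$, by odd degree, as in the proof of Proposition~\ref{MonorealCriterion}. In particular $\chi|_{M_0}$ lifts to $K(d)$, so the minimal polynomial of $b$ over $M_0$ has a root in $R'$ under $\chi$.

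That minimal polynomial equals the minimal polynomial of $b$ over $M$, because its coefficients lie in $M_0$ and it stays irreducible over the larger field. Therefore $\chi$ lifts to $M(b)$, which gives existence.

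Uniqueness for $\chi$ also holds: two roots would give two liftings of $\chi|_{M_0}$ to $K(d)$, hence two liftings of $\chi|_K$, contradicting monoreality of $d$. So $M\to L$ is monoreal, in particular $a$ is monoreal over $M$, and $\psi$ extends to $M(a)$ by Proposition~\ref{CriterionFiniteMono}. This contradicts the maximality of $(M,\psi)$, so $M=L$.

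\emph{Main obstacle.} The delicate step is the existence part: transferring monoreality from $K$ to the intermediate field $M$, which may be infinite over $K$. The reduction above handles it by passing to the finitely generated field $M_0$ and using the odd-degree lifting property. An alternative I would try if this gets messy is compactness: a lifting to $L$ exists if liftings exist to every finite subextension compatibly, and by uniqueness these liftings are automatically compatible and glue along the directed union of finite subextensions $K(d)$. This second route is in fact simpler. Each finite $K\subseteq F\subseteq L$ equals some $K(d)$, which is monoreal, so there is a unique lifting $\psi_F$. For $F\subseteq F'$, the restriction of $\psi_{F'}$ to $F$ is a lifting to $F$, hence equals $\psi_F$. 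The $\psi_F$ therefore glue to a morphism $L\to R$. I would present this gluing argument as the proof.
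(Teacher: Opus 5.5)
Your proof is correct. The gluing argument you settle on is a genuinely different route from the paper's main proof.

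The paper gets existence by Zorn's lemma. It extends a partial lifting $\psi: M\to R$ across $a\in L\setminus M$ using Lemma \ref{MonorealDiag}: that lemma makes $[M(a):M]$ odd, so the minimal polynomial of $a$ over $M$ has a root in $R$. The paper's later Zorn-free Remark defines the lifting pointwise as the unique root. It again invokes Lemma \ref{MonorealDiag} to find a common extension of $\phi_a$ and $\phi_b$ to $K(b)(a)$.

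Your direct-limit argument is close in spirit to that Remark, but you get the common extension for free. $K(a,b)$ is itself a finite subextension of $L$, hence equal to $K(d)$ with $d\in L$ monoreal. So $\phi$ has a unique lifting there, and uniqueness forces all restrictions to agree.

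What this buys you is independence from Lemma \ref{MonorealDiag}. That lemma needs the ambient field $L$ to be formally real, which Proposition \ref{LiftingAlgebraic} does not assume; the paper only deduces it afterwards. The paper's route therefore tacitly needs the easy observation that $L$ is a directed union of the formally real fields $K(d)$. You get formal reality of $L$ as a byproduct instead.

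The paper's lemma is in turn the more versatile tool. It drives the compositum and base-change results, where the relevant tower is not known to sit inside a monoreal extension of $K$, so your trick is unavailable there.

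Your discarded Zorn route is also sound. The field $M_0$ gives a version of Lemma \ref{MonorealDiag} in which oddness comes from $[K(d):K]$ rather than from formal reality. However, the justification ``it stays irreducible over the larger field'' is backwards. The correct reason is that $m_{b,M}$ has coefficients in $M_0$ and is irreducible over $M$, hence over the smaller field $M_0$, so it equals $m_{b,M_0}$.
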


Remark then that if $K\to L$ is monoreal, then the field $L$ is automatically formally real.

Beware that the converse of Proposition \ref{LiftingAlgebraic} if false. Consider for instance the extension 
$\QQ\to \QQ(\cup_{n\in\NN}2^{\frac{1}{2^n}})$. The unique ordering on $\QQ$ lifts uniquely to an ordering on $\QQ(\cup_{n\in\NN}2^{\frac{1}{2^n}})$, but the extension is not monoreal since $\sqrt{2}$ is not monoreal over $\QQ$.
Another counter-example is given by the extension $\QQ\to \RR_{\rm{alg}}$, where $\RR_{\rm{alg}}$ stands for the fields real algebraic numbers.
The unique ordering of $\QQ$ lifts uniquely to the unique ordering of $\RR_{\rm{alg}}$, but the extension $\QQ\to \RR_{\rm{alg}}$ is not monoreal (since for instance $\sqrt{2}$ is not monoreal over $\QQ$).

\begin{proof}[Proof of Proposition \ref{LiftingAlgebraic}]
	The uniqueness is clear since two different liftings should differ on a certain $K(b)$ where $b\in L$, and then we can use Criterion \ref{CriterionFiniteMono}.	
	For the existence, 
roughly speaking, the idea is to lift to $K(a)$ since $a$ is monoreal, then to take $b\in L\setminus K(a)$ and lift to $K(a)(b)$ and so on until we reach $L$. More precisely, we use Zorn Lemma together with the following Lemma \ref{MonorealDiag} and the fact that an odd degree polynomial always admits a real root in a real closed field.
\end{proof}

\begin{lem}\label{MonorealDiag}
	Let $K\to M$ be an algebraic extension of formally real fields. Consider a subfield $L$ of $M$ and an element $a\in M\setminus L$ which is monoreal over $K$.
		
	Then, the degree of the extension $L\to L(a)$ is odd.
\end{lem}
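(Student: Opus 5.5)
The plan is to compare the minimal polynomial of $a$ over $L$ with the minimal polynomial $m_a$ of $a$ over $K$, and to count real roots inside a well-chosen real closed field. I assume, as the context of Proposition \ref{LiftingAlgebraic} requires, that $L$ contains $K$. Let $q\in L[X]$ be the minimal polynomial of $a$ over $L$. Since $m_a\in K[X]\subset L[X]$ vanishes at $a$, we have $q \mid m_a$ in $L[X]$, and $[L(a):L]=\deg q$. So the goal is to show that $\deg q$ is odd.

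First I will construct a real closed field containing $L(a)$. Since $L(a)\subset M$ and $M$ is formally real, $L(a)$ is formally real. Fix an ordering on $L(a)$, for instance one restricted from an ordering of $M$, and let $R$ be a real closure of $L(a)$ for this ordering. Then $R$ is a real closed field extension of $K$. Since $a$ is monoreal over $K$, the polynomial $m_a$ has exactly one root in $R$, namely $a$ itself.

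Next I will count the roots of $q$ in $R$. Every root of $q$ in $R$ is a root of $m_a$, so $q$ has exactly one root in $R$, namely $a$. We are in characteristic zero, so the irreducible polynomial $q$ is separable and its roots in $R[i]$ (an algebraic closure of $R$) are distinct. Complex conjugation over $R$ permutes these roots, and the non-real ones come in conjugate pairs. Hence the number of roots of $q$ in $R$ is congruent to $\deg q$ modulo $2$. Since that number is $1$, $\deg q$ is odd, which is the claim.

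I expect no step to be a real obstacle. The only points needing care are that the real closed field must contain $L(a)$ (and not merely $K$), so that $a$ itself is counted among the roots of $q$, and that separability is needed for the parity count; both are handled above.
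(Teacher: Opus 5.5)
Your proof is correct and follows the paper's argument: embed $L(a)$ in a real closed field $R$, note that the minimal polynomial of $a$ over $L$ divides $m_a$ and so has only the root $a$ in $R$, and conclude by the parity of the number of real roots. You were also right to state the hypothesis $K\subseteq L$ explicitly; the paper uses it implicitly when it takes $m_L$ to be a factor of $m_K$ in $L[x]$.
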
	

\begin{proof}
Let us write $K(a)=K[x]/m_K$ and $L(a)=L[x]/m_L$ where $m_L$ is an irreducible factor of $m_K$ in $L[x]$.

Since $M$ is formally real, so is $L(a)$ and hence, there is a morphism $L(a)\to R$ where $R$ is a real closed field. If $m_L$ had even degree, then $m_L$ would admit at least two real roots in $R$. This is impossible since $m_K$ has a single real root in $R$.
\end{proof}

Another direct application of this lemma concerns the compositum of monoreal extensions:

\begin{prop}\label{MonorealCompositum}
	Let $K\to M$ be an extension of formally real fields. Let us consider two intermediate extensions $L$ and $L'$ which are monoreal over $K$.
	Then their compositum $K(L\cup L')$ in $M$ is monoreal over $K$.
\end{prop}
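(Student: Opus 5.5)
Since every element of $K(L\cup L')$ lies in some $K(a_1,\dots,a_n,b_1,\dots,b_m)$ with $a_i\in L$ and $b_j\in L'$, we reduce to finitely generated subextensions. It suffices to show that every $c$ in such a subextension is monoreal over $K$. By Proposition \ref{MonorealCriterion} and Proposition \ref{CriterionFiniteMono}, applied to a primitive element of the finite extension $F=K(a_1,\dots,a_n,b_1,\dots,b_m)$, it is enough to check that every morphism $\phi:K\to R$ into a real closed field admits exactly one lifting to $F$. Indeed, $F=K(c_0)$ with $c_0$ then monoreal, so every element of $F$ is monoreal.

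\emph{Existence.} Set $L_0=K(a_1,\dots,a_n)\subset L$. It is a finite subextension of $L$, so it is monoreal over $K$. Let $L_0=K(a)$ and $L'_0=K(b)$ with $L'_0=K(b_1,\dots,b_m)$, where $a$ and $b$ are monoreal over $K$. Then $\phi$ lifts (uniquely) to $\phi_1:K(a)\to R$. By Lemma \ref{MonorealDiag}, applied inside the formally real field $M$ to the subfield $K(a)$ and the element $b$ (if $b\notin K(a)$; otherwise there is nothing to do), the degree of $K(a)\to K(a)(b)=F$ is odd. So the minimal polynomial of $b$ over $K(a)$ has odd degree, its image under $\phi_1$ has a root in $R$, and $\phi_1$ extends to $F$.

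\emph{Uniqueness.} Let $\psi_1,\psi_2:F\to R$ both lift $\phi$. Their restrictions to $K(a)$ are liftings of $\phi$, hence equal by monoreality of $a$ and Proposition \ref{CriterionFiniteMono}. Likewise their restrictions to $K(b)$ coincide. Since $F$ is generated by $K(a)$ and $K(b)$, we get $\psi_1=\psi_2$.

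The only delicate point is the existence step. Extending from $K(a)$ to $F$ requires the oddness of $[F:K(a)]$, and this is exactly what Lemma \ref{MonorealDiag} provides, using that $M$ is formally real. Uniqueness is routine. One could alternatively deduce uniqueness by counting roots: the minimal polynomial of $b$ over $K(a)$ divides that of $b$ over $K$, which has a single root in $R$. Either route concludes that $F$, and hence $K(L\cup L')$, is monoreal over $K$.
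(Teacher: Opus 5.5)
Your proof is correct and follows the paper's own argument: you reduce via primitive elements to $F=K(a)(b)$ with $a\in L$ and $b\in L'$, get a lifting from the monoreality of $a$ plus the oddness of $[F:K(a)]$ given by Lemma \ref{MonorealDiag}, get uniqueness because the images of $a$ and $b$ are forced, and conclude with Propositions \ref{CriterionFiniteMono} and \ref{MonorealCriterion}. One minor point, which the paper shares: Lemma \ref{MonorealDiag} is stated for an algebraic extension $K\to M$, so apply it with the formally real algebraic field $F\subset M$ in place of $M$, since here $M$ need not be algebraic over $K$.
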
	
\begin{proof}
Let us consider $a\in K(L\cup L')$ and show that $K(a)$ is monoreal over $K$. There exist $b\in L$
and $b'\in L'$ such that $a\in K(b,b')$, so that we are reduced to the case where $L$ and $L'$ are finite over $K$.
	
We claim that any morphism $K\to R$, where $R$  is a real closed field, uniquely extends to $K(b,b')=K(b)(b')=K(c)$, for a certain $c\in M$.
The uniqueness is clear since the image of $b$ and $b'$ are uniquely determined by monoreality of $L$ and $L'$. 
The existence of such a lifting to $K(b)$ comes from the fact that $L$ is monoreal over $K$ and the existence of a lifting to $K(b)(b')=K(c)$ comes from the fact that 
$b'$ has odd degree over $K(b)$ by Lemma \ref{MonorealDiag}.

By Proposition \ref{CriterionFiniteMono} we conclude that $c$ is monoreal over $K$ and hence, by Proposition \ref{MonorealCriterion}, that $a$ is monoreal over $K$.
\end{proof}

Let us note that Proposition \ref{MonorealCompositum} is no longer true without the assumption that $M$ is formally real as showed by the example $K=\QQ$, $M=\CC$, $a=\sqrt[3]{2}$,  $L=\QQ[j\sqrt[3]{2}]$: $L$ and $K(a)$ are monoreal over $K$ but the compositum in $M$ is not.

\begin{rem}
	Let us mention an alternative proof of Proposition \ref{LiftingAlgebraic} without using Zorn Lemma to show the existence of a lifting. 
	Let us define $\phi:L\to R$ by assigning to $a\in L$ the unique root of its minimal polynomial in $R$. To check $\phi$ is a morphism, consider $a,b\in L$ and the corresponding morphisms $\phi_a:K(a)\to R$ and $\phi_b:K(b)\to R$. They factorize by a common morphism $\psi:K(b)(a)\to R$ by Lemma \ref{MonorealDiag}, so that 
	$$\psi(a)+\psi(b)=\phi_a(a) + \phi_b(b)=\phi(a) + \phi(b).$$
	On the other hand $\psi$ restricts to a morphism $K(a+b)\to R$, so by uniqueness in Criterion \ref{CriterionFiniteMono} $\psi(a+b)=\phi(a+b)$. We do similarly for the product.
\end{rem}
\subsection{Compositions and restrictions}

Let us study compositions of monoreal extensions.
The following stability results are quite obvious for finite field extensions by using Criterion  \ref{CriterionFiniteMono}. Our goal is to show that these stabilities remain true in the case of algebraic extensions.
	
\begin{prop}\label{MonorealComposition}
	Let $K\to L\to M$ be a sequence of two monoreal algebraic field extensions.
	Then, the extension $K\to M$ is monoreal.
\end{prop}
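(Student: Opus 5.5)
Fix $a\in M$. We want $a$ monoreal over $K$. By Proposition \ref{MonorealCriterion} and Proposition \ref{CriterionFiniteMono}, it suffices to find a finite subextension $K\to F\subset M$ containing $a$ such that every morphism $K\to R$ into a real closed field has exactly one lifting to $F$. Indeed, $F=K(c)$ for some $c$, so $c$ is then monoreal over $K$ by Criterion \ref{CriterionFiniteMono}, and hence $a\in K(c)$ is monoreal over $K$ by Proposition \ref{MonorealCriterion}.

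\emph{Reduction to a finite tower.} Let $m\in L[X]$ be the minimal polynomial of $a$ over $L$. Its finitely many coefficients lie in a finite subextension $L_0=K(b)\subset L$, with $b$ monoreal over $K$ since $K\to L$ is monoreal. Set $F=L_0(a)$. The polynomial $m$ is irreducible over $L$, hence over $L_0$, so it is the minimal polynomial of $a$ over $L_0$. It is monoreal as a polynomial over $L$, meaning it has exactly one root in any real closed field extension of $L$.

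\emph{Existence of a lifting.} Let $\phi:K\to R$ be given. By Proposition \ref{LiftingAlgebraic} applied to $K\to L$, $\phi$ extends (uniquely) to $\tilde\phi:L\to R$. By Proposition \ref{LiftingAlgebraic} applied to $L\to M$, $\tilde\phi$ extends to $M$. Restricting gives a lifting to $F$.

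\emph{Uniqueness.} Let $\psi_1,\psi_2:F\to R$ both extend $\phi$. Their restrictions to $L_0=K(b)$ agree, by Criterion \ref{CriterionFiniteMono} applied to $b$; call the common restriction $\phi_0$. It remains to show that $\phi_0$ has only one lifting to $L_0(a)$, i.e.\ that $m^{\phi_0}$ has a single root in $R$.

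\emph{The main obstacle.} Monoreality of $m$ is formulated over $L$, not over $L_0$, so we need a way to transfer it. The plan:
\begin{enumerate}
\item Extend $\phi_0$ to $\tilde\phi:L\to R$. This is possible because $\phi_0$ is the restriction of the unique extension of $\phi$ to $L$ from Proposition \ref{LiftingAlgebraic}, by uniqueness on $K(b)$.
\item Since $\tilde\phi$ embeds $L$ into the real closed field $R$, $R$ can be viewed as a real closed field extension of $L$.
\item Because $m$ has coefficients in $L_0$, $\tilde\phi(m)=\phi_0(m)$.
\item By monoreality of $m$ over $L$, this polynomial has exactly one root in $R$.
\end{enumerate}
Hence $\psi_1(a)=\psi_2(a)$, so $\psi_1=\psi_2$ on $F$. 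This concludes by the first paragraph.

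The delicate point is only making sure the statement "monoreal over $L$" is read via an arbitrary embedding $L\to R$, which is exactly how real closed field extensions of $L$ arise.
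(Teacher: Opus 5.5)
Your proof is correct. It uses the same basic idea as the paper: existence by lifting $\phi$ through $L$ and then $M$ with Proposition \ref{LiftingAlgebraic}, and uniqueness from monoreality at each stage. The organization differs, though. You pass to the finite tower $K\subset K(b)\subset K(b)(a)$, where $K(b)$ contains the coefficients of the minimal polynomial of $a$ over $L$. You prove the exactly-one-lifting property there, then descend to $K(a)$ through a primitive element and Proposition \ref{MonorealCriterion}, much as the paper does in its proof of Proposition \ref{MonorealCompositum}.

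The paper instead applies Criterion \ref{CriterionFiniteMono} directly to $K(a)$. Read literally, its uniqueness sentence only controls those liftings of $\phi$ to $K(a)$ that are restrictions of the lifting to $M$. To handle an arbitrary lifting $\psi$ of $\phi$ to $K(a)$, one must first extend $\psi$ to $K(b)(a)$. This is possible because $[K(b)(a):K(a)]$ is odd, as both $[K(b):K]$ and $[K(b)(a):K(b)]$ are. One then observes that this extension agrees with the lifting to $L$ on $K(b)$. The proof of Proposition \ref{MonorealCriterion} contains exactly this odd-degree argument, so your detour through $F$ supplies the step the paper leaves implicit. Your version is therefore the more complete of the two, at the cost of a few extra lines.
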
	
\begin{proof}
	Let $a\in M$ and consider $\phi :K\to R$ with $R$ real closed. Since $K\to L$ is monoreal, $\phi$ lifts to $L$ by Proposition \ref{LiftingAlgebraic}. It remains to use that $L\to M$ is monoreal to get a lifting to $M$ (Proposition \ref{LiftingAlgebraic}) and hence to $K(a)$. Moreover, the image of $a$ by such a lifting is unique by Criterion  \ref{CriterionFiniteMono}, which concludes the proof. 
\end{proof}

Conversely, one has the stability of the monoreality condition under restriction:

\begin{prop}\label{MonorealRestriction}
	Let $K\to L\to M$ be two field extensions.
	If $K\to M$ is monoreal, then the extensions $K\to L$ and
	$L\to M$ are both monoreal.
\end{prop}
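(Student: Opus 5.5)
The statement has two parts, and I will treat them separately.

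The first part, that $K\to L$ is monoreal, is immediate from the definition. Any $a\in L$ also lies in $M$. So $a$ is monoreal over $K$, since monoreality of an element over $K$ depends only on its minimal polynomial over $K$, which is the same whether $a$ is viewed in $L$ or in $M$.

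The second part, that $L\to M$ is monoreal, is where the work lies. Fix $a\in M$ and reduce to finite data. Let $m_L$ be the minimal polynomial of $a$ over $L$. Its coefficients lie in some finitely generated subextension of $K\to L$. That subextension is finite, hence of the form $K(b)$ for some $b\in L$ by the primitive element theorem. Take $L_0=K(b)$. The plan is then in two steps:
\begin{itemize}
\item first show that $a$ is monoreal over $L_0$;
\item then transfer this to $L$.
\end{itemize}

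\textbf{Monoreality over $L_0$.} Since $K\to M$ is monoreal, the compositum $K(b,a)=K(c)$ is monoreal over $K$ (Proposition \ref{MonorealCriterion}). Let $\psi:L_0\to R$ be a morphism into a real closed field, and let $\phi=\psi|_K$. By Proposition \ref{CriterionFiniteMono}, $\phi$ has exactly one lifting to $K(c)=L_0(a)$. The restriction of this lifting to $L_0$ is a lifting of $\phi$ to $L_0$. But $\phi$ also has a unique lifting to $L_0$, since $b$ is monoreal, and $\psi$ is such a lifting. So the restriction equals $\psi$, which means $\psi$ lifts to $L_0(a)$. Any two liftings of $\psi$ to $L_0(a)$ are liftings of $\phi$ to $K(c)$, so they coincide. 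By Proposition \ref{CriterionFiniteMono} over the base $L_0$, the element $a$ is monoreal over $L_0$.

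\textbf{Transfer to $L$; this is the main obstacle.} The difficulty is that the minimal polynomial $m_{L_0}$ of $a$ over $L_0$ may factor over $L$. I would use Lemma \ref{MonorealDiag} with base $L_0$, the extension $L_0\to M$, the intermediate field $L$ and the element $a$; this needs $M$ formally real. That holds because $K\to M$ is monoreal, so $M$ embeds in a real closed field by Proposition \ref{LiftingAlgebraic}. In fact $L_0$ was chosen so that $m_L$ already has coefficients in $L_0$. So $m_L$ is an irreducible factor of $m_{L_0}$ lying in $L_0[X]$, and irreducibility of $m_{L_0}$ forces $m_L=m_{L_0}$. Hence $m_L$ is monoreal over $L_0$.

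It remains to pass from $L_0$ to $L$. Let $R$ be a real closed field containing $L$; it also contains $L_0$. By the previous step, $m_L=m_{L_0}$ has exactly one root in $R$. Therefore $m_L$ is monoreal over $L$, and $L\to M$ is monoreal. (Lemma \ref{MonorealDiag} is only needed if one does not make this choice of $L_0$ carefully.)
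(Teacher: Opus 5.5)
Your proof is correct, but it obtains the existence of liftings differently from the paper.

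\textbf{The paper's route.} The paper works directly over $L$. Given $\phi:L\to R$, Lemma \ref{MonorealDiag} shows that $[L(a):L]$ is odd. So the minimal polynomial of $a$ over $L$ has a root in $R$, and $\phi$ extends to $L(a)$. Uniqueness then follows, as in your argument, because the image of $a$ is forced by monoreality of $a$ over $K$.

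\textbf{Your route.} You first descend to the finite subextension $L_0=K(b)$ generated by the coefficients of $m_L$. You then get existence from a rigidity argument: the unique lift of $\psi|_K$ to $K(b,a)$ restricts to a lift on $L_0$, and that restriction must equal $\psi$ because $K\to K(b)$ has unique liftings. Finally, $m_L=m_{L_0}$ together with the fact that every real closed extension of $L$ is also one of $L_0$ carries the conclusion up to $L$.

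\textbf{Comparison.} Your argument uses only the finite lifting criterion (Proposition \ref{CriterionFiniteMono}) and never the odd-degree Lemma \ref{MonorealDiag}. The reduction to $L_0$ is what makes this possible. Running your rigidity argument directly over a possibly infinite $L$ would need the existence half of Proposition \ref{LiftingAlgebraic}, and in the paper that rests on Lemma \ref{MonorealDiag}. The paper's proof, once the lemma is available, is shorter and needs no primitive element or descent step.

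Your paragraph invoking Lemma \ref{MonorealDiag} in the transfer step is unnecessary, as your own parenthetical remark admits, and can simply be deleted.
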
	
\begin{proof}
Note first that $L$ is formally real since $M$ is.
The extension $K\to L$ is obviously monoreal by definition. 

Let us show now that $L\to M$ is monoreal. 
Let $a\in M$ and $\phi:L\to R$ where $R$ is real closed. Then, $\phi$ induces a morphism $K\to R$. Since $K\to M$ is monoreal, one get a lifting to $K(a)\to R$. Using now Lemma \ref{MonorealDiag}, one may extend $\phi$ to $L(a)$. This lifting is unique since the image of $a$ is uniquely determined by monoreality of $K\to K(a)$. 
\end{proof}

Let us mention that monoreal extensions are also stable under base change.

\begin{prop}\label{MonorealBaseChange}
	Let $K\to L$ be a monoreal field extension embedded in a formally real field which contains some algebraic element $a$. Then, 
	$K(a)\to L(a)$ is monoreal.
\end{prop}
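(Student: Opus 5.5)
The plan is to show that every element $c\in L(a)$ is monoreal over $K(a)$. I reduce to finite data first. Such a $c$ lies in $K(b)(a)=K(a)(b)$ for some $b\in L$: indeed $c$ is a rational expression in $a$ and finitely many elements of $L$, and $K(b_1,\dots,b_n)$ is finite over $K$, hence primitive. The extension $K(b)$ is contained in $L$, so it is monoreal over $K$ by Proposition \ref{MonorealRestriction}. Once $K(a)\to K(a)(b)$ is shown to be monoreal, Proposition \ref{MonorealCriterion} gives that $c$ is monoreal over $K(a)$. So it suffices to show that $b$ is monoreal over $K(a)$, using the criterion of Proposition \ref{CriterionFiniteMono}.

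Let $\psi:K(a)\to R$ be a morphism into a real closed field, and let $\phi=\psi_{|K}$.

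\emph{Uniqueness.} Any lifting of $\psi$ to $K(a)(b)$ restricts to a lifting of $\phi$ to $K(b)$. Since $b$ is monoreal over $K$, the image of $b$ is forced, and the image of $a$ is fixed by $\psi$. Hence there is at most one lifting.

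\emph{Existence.} The ambient field $M$ is formally real and contains $K(a)$ and $b$, and $b\notin K(a)$ may be assumed (otherwise the statement is trivial). Lemma \ref{MonorealDiag}, applied to the subfield $K(a)\subset M$ and the element $b$ monoreal over $K$, gives that $[K(a)(b):K(a)]$ is odd. The minimal polynomial of $b$ over $K(a)$, pushed forward by $\psi$, has odd degree, so it has a root in $R$. This yields a lifting of $\psi$ to $K(a)(b)$.

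Both conditions of Proposition \ref{CriterionFiniteMono} then hold, so $b$ is monoreal over $K(a)$, and the reduction above concludes.

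The main point to watch is the use of Lemma \ref{MonorealDiag}: it needs an ambient formally real field, and this is exactly why the hypothesis that everything sits inside a formally real field is required. The counterexample after Proposition \ref{MonorealCompositum} shows the statement fails without it. The reduction step also requires that monoreality of $K\to L$ passes to the finite subextension $K(b)$, which is immediate from the definition or from Proposition \ref{MonorealRestriction}.
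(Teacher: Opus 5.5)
Your proof is correct and follows essentially the paper's argument. You make the same reduction to a primitive element $b$ of a finite subextension of $L$, use Lemma \ref{MonorealDiag} to get odd degree and hence existence, and use monoreality of $b$ over $K$ for uniqueness. The only difference is cosmetic: you phrase uniqueness through the lifting criterion of Proposition \ref{CriterionFiniteMono}, whereas the paper notes directly that $m_{b,K(a)}$ divides $m_{b,K}$, which has a single real root.
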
	
If $a$ is monoreal over $K$, then it comes from Lemma \ref{MonorealDiag} and the monoreal stability under restriction.
But it is also valid even if $a$ is not monoreal.
\begin{proof}
Let $b\in L(a)$. We want to prove that $b$ is monoreal over $K(a)$. Considering the coefficients of $b$ as a fraction of the variable $a$, one may assume that $L=K(c)$ for some $c$ and $b\in K(c)(a)$ and it is enough to show that $c$ is monoreal over $K(a)$. Namely, we are reduced to prove the proposition in the special case $L=K(b)$ is finite over $K$. 

Using Lemma \ref{MonorealDiag}, the minimal polynomial $m_{b,K(a)}$ of $b$ over $K(a)$ has odd degree. It has then at least one real root in any real closed field extension $R$ of $K(a)$. Since it is a factor of $m_{b,K}$ the minimal polynomial of $b$ over $K$ which has exactly one real root in $R$, one deduces that 
$m_{b,K(a)}$ is monoreal.
\end{proof}

\subsection{Monoreal closure}

One may emphasize the analogy between the notions of monoreal extension and purely inseparable extension.

Indeed, recall that an extension $K\to K(a)$ is purely inseparable if the minimal polynomial of $a$ over $K$ admits exactly one root in any algebraically closed field extension of $K$. Moreover, 
the algebraic extension $K\to L$ appears to be purely inseparable (it means that for any $a\in L$, the extension $K\to K(a)$ is purely inseparable) if and only if any morphism $K\to C$ into an algebraically closed field admits exactly one lifting to $L$.

One may carry on the analogy defining the notion of monoreal closure:

\begin{defn} 
A monoreal closure of $K$ is a monoreal algebraic extension which is maximal (it does not admit any monoreal non trivial algebraic extension).
The monoreal closure of $K$ in an algebraic formally real extension $L$ is the set of all elements in $L$ which are monoreal over $K$.
\end{defn}

\begin{prop} 
The monoreal closure of $K$ in $L$ is a field which is a monoreal extension. It is the maximal monoreal field extension over $K$ contained in $L$.
\end{prop}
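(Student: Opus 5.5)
Let $N\subseteq L$ be the set of elements of $L$ that are monoreal over $K$. The plan is to show three things: $N$ is a subfield of $L$, $K\to N$ is monoreal, and $N$ contains every monoreal subextension of $L$. Since $L$ is a formally real algebraic extension of $K$, all the earlier results stated inside a formally real ambient field apply with $M=L$.

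\emph{$N$ is a field containing $K$.} Every element of $K$ has a degree-one minimal polynomial, so it is monoreal and $K\subseteq N$. Take $a,b\in N$. By Proposition \ref{MonorealCriterion}, $K(a)$ and $K(b)$ are monoreal over $K$. Both lie in the formally real field $L$, so Proposition \ref{MonorealCompositum} shows that the compositum $K(a,b)$ is monoreal over $K$. Hence every element of $K(a,b)$ is monoreal over $K$. In particular $a\pm b$, $ab$, and $a^{-1}$ (when $a\neq 0$) lie in $N$, so $N$ is a subfield.

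\emph{Monoreality and maximality.} The extension $K\to N$ is monoreal by definition, since every element of $N$ is monoreal over $K$. Now let $K\subseteq L'\subseteq L$ be a monoreal intermediate field. Each element of $L'$ is monoreal over $K$, so $L'\subseteq N$. Therefore $N$ is the largest monoreal extension of $K$ contained in $L$, and it is unique.

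The only real step is closure under the field operations. It rests on Proposition \ref{MonorealCompositum}, and that result needs the ambient field to be formally real. This is why we must assume $L$ is formally real: the example $\QQ\subset\CC$ shows the argument fails without it. Everything else is formal.
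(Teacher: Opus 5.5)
Your proof is correct and follows essentially the same route as the paper: apply Proposition \ref{MonorealCompositum} to $K(a)$ and $K(b)$ inside the formally real field $L$, then conclude that $a+b$, $ab$ (and $a^{-1}$) are monoreal as elements of the monoreal extension $K(a,b)$. The only difference is that the paper cites Proposition \ref{MonorealRestriction} for this last step, while you correctly observe it is immediate from the definition; you also spell out the containment $K\subseteq N$, closure under inverses, and maximality, which the paper leaves implicit.
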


\begin{proof}
Proposition \ref{MonorealCompositum} say that, given $a$ and $b$ monoreal over $K$, then
$K(a)(b)$ is monoreal over $K$. Then, $a+b$ and $ab$ are monoreal over $K$ by Proposition \ref{MonorealRestriction}. 	
\end{proof}

Moreover, one has also:
\begin{thm}
The monoreal closure $\overline{K}$ of $K$ exists and it is unique up to a unique $K$-homomorphism.
\end{thm}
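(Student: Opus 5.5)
Existence is a Zorn argument, and uniqueness comes from the lifting property of Proposition \ref{LiftingAlgebraic} together with the stability results of the previous subsection.

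\emph{Existence.} Fix an algebraic closure $\Omega$ of $K$. Consider the set $\mathcal F$ of subfields $L\subset \Omega$ containing $K$ with $K\to L$ monoreal, ordered by inclusion. It is nonempty, since it contains $K$. A chain has its union as an upper bound: any $a$ in the union lies in some member of the chain, so its minimal polynomial over $K$ is monoreal. By Zorn's Lemma there is a maximal element $\overline K$. Suppose $\overline K\to M$ were a nontrivial monoreal algebraic extension. We may take $M\subset\Omega$, since $M$ is algebraic over $K$ and $\overline K\subset\Omega$. Then $K\to M$ is monoreal by Proposition \ref{MonorealComposition}, contradicting maximality. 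Hence $\overline K$ is a monoreal closure.

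\emph{Uniqueness.} Let $\overline K$ and $\overline K'$ be two monoreal closures. Both are formally real, so choose a real closure $R$ of $\overline K'$; this gives a morphism $\phi:K\to R$. By Proposition \ref{LiftingAlgebraic} applied to $K\to\overline K$, $\phi$ admits exactly one lifting $\sigma:\overline K\to R$. Likewise $\phi$ lifts uniquely to $\overline K'$, and that lifting is the inclusion $\overline K'\subset R$.

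Next, the compositum $M=\sigma(\overline K)\,\overline K'$ inside the formally real field $R$ is monoreal over $K$ by Proposition \ref{MonorealCompositum}, because $\sigma(\overline K)\cong\overline K$ is monoreal over $K$. By Proposition \ref{MonorealRestriction}, $\overline K'\to M$ is then monoreal. Maximality of $\overline K'$ forces $M=\overline K'$, so $\sigma(\overline K)\subset \overline K'$.

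Symmetrically, Proposition \ref{MonorealRestriction} shows $\sigma(\overline K)\to\overline K'$ is monoreal. Transporting along $\sigma$ gives a nontrivial monoreal extension of $\overline K$ unless $\sigma(\overline K)=\overline K'$, so maximality of $\overline K$ gives equality. Thus $\sigma$ is a $K$-isomorphism.

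For uniqueness of the $K$-homomorphism, let $\tau:\overline K\to\overline K'$ be any $K$-homomorphism. Composing with the inclusion $\overline K'\subset R$ gives a lifting of $\phi$ to $\overline K$. Proposition \ref{LiftingAlgebraic} says this lifting is unique, so $\tau=\sigma$.

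The main obstacle is the transport step in the uniqueness argument. One must check that an extension $\sigma(\overline K)\to\overline K'$ yields, via $\sigma^{-1}$, an abstract algebraic extension of $\overline K$, and that monoreality is preserved under this isomorphism. This only requires that the definition is invariant under field isomorphisms, which is formal.
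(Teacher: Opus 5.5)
Your proof is correct. The uniqueness part is close to the paper's; the existence part takes a different route.

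\textbf{Existence.} The paper avoids Zorn's Lemma. It fixes a real closed field $R$ over $K$ and takes $\overline K$ to be the set of elements of $R$ that are monoreal over $K$. This is a field by the proposition just before the theorem, which rests on Proposition \ref{MonorealCompositum}. Maximality then comes from Proposition \ref{LiftingAlgebraic}: any monoreal extension of $K$ lifts into $R$ and so lands in $\overline K$. This gives a concrete model of the closure, which the paper uses right away to describe the monoreal closure of $\QQ$ inside $\RR_{\rm{alg}}$. Your Zorn argument in an algebraic closure needs only that monoreality is an elementwise condition, so it passes to unions of chains, together with Proposition \ref{MonorealComposition}. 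It gives no explicit description, although one could recover it afterwards from uniqueness.

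\textbf{Uniqueness.} The paper maps each closure into a real closed field containing the other. It asserts ``by maximality'' that the image lies in the other closure, which yields maps $f$ and $g$. It then shows $f\circ g$ and $g\circ f$ are identities by uniqueness of liftings. You differ in two ways:
\begin{itemize}
\item You spell out the containment $\sigma(\overline K)\subset\overline K'$ through Propositions \ref{MonorealCompositum} and \ref{MonorealRestriction}. This is exactly what the paper's ``by maximality'' step tacitly requires, so your version is more complete there.
\item You get surjectivity of $\sigma$ from maximality of $\overline K$, applied to the monoreal extension $\sigma(\overline K)\to\overline K'$, instead of constructing an inverse.
\end{itemize}
Your argument for the uniqueness of the $K$-homomorphism is the same as the paper's.
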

\begin{proof}
The existence is given by considering a real closed field $R$ over $K$ and setting $\overline{K}$ to be the 
monoreal closure of $K$ into $R$.
Then, $\overline{K}$ is a maximal monoreal extension since any monoreal extension $L$ of $K$ injects into $R$ by Proposition \ref{LiftingAlgebraic}.

For the uniqueness, let us consider two monoreal closures $\overline{K_1}$	and $\overline{K_2}$ which embed respectively into real closed fields $R_1$ and $R_2$.

According to Proposition \ref{LiftingAlgebraic},
one gets a lifting of $K\to R_1$ to $\phi:\overline{K_2}\to R_1$. By maximality of $\overline{K_1}$ as a monoreal extension, the image $\phi(\overline{K_2})$ is contained into the image of $\overline{K_1}$ in $R_1$, namely $\phi$ induces a morphism $f:\overline{K_2}\to \overline{K_1}$.
Likewise, one has a morphism $g:\overline{K_1}\to \overline{K_2}$. And the compositions $f\circ g$ and $g\circ f$ are identity morphism by Proposition \ref{LiftingAlgebraic}.

Again, Proposition \ref{LiftingAlgebraic} shows the uniqueness of an isomorphism between two monoreal closures.
\end{proof}

	Of course, the monoreal closure of a real closed field $R$ is $R$ itself.
	
	But, in general, it is non obvious to describe elementarily a monoreal closure. For instance, we know that 
	the monoreal closure 
	of $\QQ$ is the subfield of $\RR_{\rm{alg}}$ 
	whose elements are exactly those real algebraic numbers whose minimal polynomial has a single real root. For instance, it contains $\sqrt[r]{n}$ where $n$ is an integer and $r$ is odd. It contains also, for instance, the unique real root of the polynomial $x^3+x+1$. It contains a lot of other roots and seems complicated to describe elementarily.

However, we will be able to describe the monoreal closure of $\RR(t)$ in the forthcoming subsection.

Together with the monoreal closure comes an universal property:
\begin{prop}
	Any monoreal extension of $K$ uniquely embeds into its monoreal closure.
\end{prop}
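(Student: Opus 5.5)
Let $K\to L$ be a monoreal extension and $\overline{K}$ the monoreal closure, realized as in the existence part of the preceding theorem: fix a real closed field $R$ containing $K$ and let $\overline{K}$ be the set of elements of $R$ that are monoreal over $K$. The plan has two steps, existence and then uniqueness of a $K$-embedding $L\to\overline{K}$, both driven by Proposition \ref{LiftingAlgebraic}.

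\emph{Existence.} Apply Proposition \ref{LiftingAlgebraic} to the inclusion $K\to R$. Since $K\to L$ is monoreal, this morphism admits a lifting $\phi:L\to R$, which is automatically injective because $L$ is a field. For $a\in L$, the image $\phi(a)$ has the same minimal polynomial over $K$ as $a$, because $\phi$ is the identity on $K$. That polynomial is monoreal, so $\phi(a)$ is monoreal over $K$ and hence lies in $\overline{K}$. Thus $\phi$ factors through an embedding $L\to \overline{K}$. This is exactly the maximality argument already used in the existence part of the theorem.

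\emph{Uniqueness.} Let $\psi_1,\psi_2:L\to\overline{K}$ be two $K$-embeddings. Composing each with the inclusion $\overline{K}\subset R$ gives two liftings of $K\to R$ to $L$. By the uniqueness part of Proposition \ref{LiftingAlgebraic}, these two liftings coincide, so $\psi_1=\psi_2$.

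If $\overline{K}$ is taken abstractly rather than inside $R$, the claim should be read up to the unique isomorphism provided by the theorem. One then chooses a real closed field $R$ containing $\overline{K}$. Since $K\to\overline{K}$ is monoreal, $\overline{K}$ is formally real, so such an $R$ exists, and the argument above applies verbatim. The only point that needs care is checking that the image of $L$ lands inside $\overline{K}$ and not just inside $R$. This is the minimal-polynomial observation in the existence step, so there is no real obstacle.
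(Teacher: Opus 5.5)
Your proof is correct and follows the paper's argument. You lift $K\to R$ to $L$ by Proposition \ref{LiftingAlgebraic}, note that the image lies in $\overline{K}$, and get uniqueness from the uniqueness part of the same proposition; the paper leaves implicit why the image lands in $\overline{K}$, and you make it explicit through preservation of minimal polynomials. For the abstract case, the step you call ``verbatim'' also uses that the image of $\overline{K}$ in $R$ is the whole set of elements of $R$ monoreal over $K$, which follows from the maximality of $\overline{K}$ together with Proposition \ref{MonorealRestriction}.
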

\begin{proof}
	Let $K\to L$ be monoreal and $\overline{K}$ a monoreal closure of $K$ embedded into a real closed field $R$. 
	By Proposition \ref{LiftingAlgebraic}, on gets a unique morphism $L\to R$ whose image is contained into the image of $\overline{K}$. 
\end{proof}

To end this section, we would like to emphasize that monoreal extensions have a (surprisingly) good behaviour. Indeed, one may try to mimic this section for bireal extensions, namely those extensions for which the minimal polynomial of a primitive element admits exactly $2$ real roots. Unfortunately, those are not stable under composition as shown by the example 
$\QQ\to \QQ(\sqrt{2})\to \QQ(\sqrt{2})(\sqrt{3})$ since the resulting degree 4 extension is totally real (its minimal polynomial admits $4$ real roots). 
Likewise, this notion is not stable under restriction
as shown by the example $\QQ\to \QQ(\sqrt[3]{2})\to \QQ(\sqrt[6]{2})$ since the resulting extension is bireal, but the first one is monoreal.
Analogous examples can be produced for threereal extension and so on.
One may also think about totally real extensions. Although theses are stable under restriction, they are not preserved under composition
as shown by the example 
$\QQ\to \QQ(\sqrt{2})\to \QQ(\sqrt[4]{2})$. 

Besides, one may consider extensions with only an odd number of real roots. This property is stable under composition and restriction since such an extension has odd degree (although it is not stable under compositum).

\subsection{An example : the monoreal closure of $\RR(t)$}
The aim of this section is to describe 
the monoreal closure $C$ of $\RR(t)$ as the set $\GM$ of all continuous maps $f:\RR\to \RR\cup\{\pm\infty\}$ whose graph $G(f)$ coincides with its Zariski closure up to finitely many points.

We begin with describing some elements of $\GM$ to give the flavour of the construction.

\begin{ex}\begin{enumerate}
\item 
The function $f:t\mapsto\sqrt[3]{\frac{1}{t}}$ parametrizes the real root of the monoreal polynomial $P=X^3-\frac{1}{t}\in \R(t)[X]$. Setting $Q=tX^3-1$, we see that the graph of $f$ coincides with the real algebraic set defined by $Q$ in $\R^2$. Note that $f$ goes to the infinity at $0$.

\item The one dimensional part of the cubic given by $P=Q=X^2(X-1)-t^2\in \R[t][X]$, having an isolated point at the origin, admits a parametrization $t\mapsto g(t)$ which is an element in $\GM$. Note that the cubic has two points with coordinates $t=0$, and $g(0)=1$ by continuity.
\end{enumerate}
\end{ex}

Let us recall that orderings on $\RR(t)$ corresponds to cuts $x_+$, $x_-$, where $x$ is a real number, or $-\infty,+\infty$. For instance, non-negative rational functions for the ordering $x_+$ are those functions which remain non-negative on a neighbourhood at the right hand side of $x$.  
Then, any $a\in C$ can be seen as an element of the real closure $\RR(x_+)$ of $\RR(t)$ relatively to the ordering $x_+$. 
Hence, $a$ can be seen as a semi-algebraic continuous function (an half branch of an algebraic curve) defined in a neighbourhood of $x_+$ (cf \cite[8.1.12]{BCR}).
It may happen that this half branch goes to $\pm\infty$.
Likewise, $a$ can be seen as an half branch of an algebraic curve in a neighbourhood of $x_-$ and $\pm \infty$.
It remains now to glue these local data to get a map $f:\RR\to \RR\cup\{\pm\infty\}$.

There is a monic irreducible polynomial $P$ of degree $d$  in $\RR(t)[y]$ such that $P(a)=0$.
Chasing the common denominator $d(t)$, one gets an irreducible polynomial $Q$ in 
$\RR[t][y]$ such that $Q(a)=0$.
Viewing $t$ as a real parameter, 
on $\{ d\not=0\}$, we have a continuous parametrization of the $d$ roots of $Q$ : $\alpha_1(t),\ldots,\alpha_d(t)$.
In fact, the $\alpha_i$ are even analytic if we restrict to $\{\textrm{Disc}(Q)\not=0\}$, where $\textrm{Disc}(Q)$ stands for the discriminant of the polynomial $Q$.
Since $a$ is monoreal, on $\{ d\not=0\}\cap\{\textrm{Disc}(Q)\not=0\}$, there is only one real root, let's say $\alpha_1(t)$ which necessarily coincides with the half branch given by $a$ as defined previously. As a consequence, $a$ is canonically associated to the continuous function $f=\alpha_1(t)$ defined on $\{ d\not=0\}$ and with values in $\RR$. 

To extend $f$ to the whole $\RR$, with possible values in $\RR\cup\{\pm\infty\}$, it suffices to consider the limits of $\alpha_1$. 
Indeed, if $a$ goes to a finite limit at $x_+$, then it  goes to the same finite value at $x_-$ by continuity of the roots, so that $f$ is continuous at $x$. Moreover, if it goes to the infinity at $x_+$, then necessarily $d(x)=0$ and 
$a$ goes also to the infinity at $x_-$. 
This comes from the fact that the induced half-branch at $x_+$ can be parametrized by a Puiseux series of the form $f_+=\sum_{k\geq k_0}a_kt^{\frac{k}{q}}$ where $a_k\in \RR$, $k,k_0,p,q$ are integers and $q$ is odd by monoreality of $a$ (otherwise, there would exists two branches at $x_+$). Now, since $q$ is odd, the unique half-branch induced by $a$ at $x_-$ comes by substituting $t$ with $-t$ in the Puiseux series $f_+$.   

Moreover, the graph of $f$ is closed with respect to Euclidean topology and its Zariski closure captures at most a finite number of points (whose first coordinate $x$ necessarily satisfies $d(x)=0$). Such sets are called Zariski constructible sets, and have been studied for instance in \cite{P}.
So we define a map $\Phi : C\to \GM$ by setting $\Phi(a)=f$. Note that the sum and product of elements of $\GM$ remain in $\GM$ ; it comes from the fact that the image of Zariski closed constructible sets by injective regular maps remains Zariski closed, cf. \cite{P}. As a consequence $\GM$ may be equipped with a field structure.

\begin{prop}
	The map $\Phi$ is a field isomorphism.
\end{prop}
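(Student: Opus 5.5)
We need to show $\Phi$ is a ring morphism, injective, and surjective onto $\GM$. The setup is to fix an embedding $C\subset \RR(x_+)$ for some cut (say $+\infty$), and to identify each $a\in C$ with the germ of $\Phi(a)$ at every cut. Here $\RR(x_+)$ is the real closure of $\RR(t)$ for that ordering, realized as the field of germs of semi-algebraic continuous functions at $x_+$ \cite[8.1.12]{BCR}.

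\textbf{Step 1 (morphism).} For $a,b\in C$, the sum $a+b$ lies in $C$, since $C$ is a field by the proposition on monoreal closures. On the cofinite open set where the denominators of the minimal polynomials of $a$, $b$ and $a+b$ do not vanish, $\Phi(a)(t)$, $\Phi(b)(t)$ and $\Phi(a+b)(t)$ are the unique real roots of their respective polynomials. Choose $t_0$ in this set. The evaluation $\RR(t)\to\RR$ at the ordering $t_{0+}$ extends to $K(a,b)$, and by Proposition \ref{CriterionFiniteMono} and Proposition \ref{MonorealCompositum} this extension is unique. It sends $a,b,a+b$ to real roots of their polynomials, hence to $\Phi(a)(t_0),\Phi(b)(t_0),\Phi(a+b)(t_0)$. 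Ring compatibility of this specialization then gives $\Phi(a+b)=\Phi(a)+\Phi(b)$ on a cofinite set, and everywhere by continuity. Concretely, the germs at $t_{0+}$ in $\RR(t_{0+})$ add. The same argument handles products. Since $\Phi(1)=1$ and $C$ is a field, $\Phi$ is injective.

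\textbf{Step 2 (surjectivity).} This is the main obstacle. Let $f\in\GM$ and let $Z$ be the Zariski closure of its graph, a curve in $\RR^2$. Take an irreducible $Q\in\RR[t][y]$ vanishing on the graph; such $Q$ exists because the graph is one-dimensional and semi-algebraic. Since $f$ is semi-algebraic, its germ at $+\infty$ defines an element $a$ of the real closure of $\RR(t)$ with $Q(a)=0$, so $a$ is algebraic over $\RR(t)$ with minimal polynomial $P$ dividing $Q$. We must show $P$ is monoreal. The orderings of $\RR(t)$ are the cuts $x_\pm,\pm\infty$, and the real closed field extensions of $\RR(t)$ are controlled by these real closures. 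So it suffices to show that for each cut, $P$ has exactly one root in $\RR(x_\pm)$, i.e.\ exactly one half-branch of the curve $\{P=0\}$ over a one-sided neighbourhood of $x$. Existence of such a half-branch is given by the germ of $f$, which is valid because $Z\setminus G(f)$ is finite, and continuity with values in $\RR\cup\{\pm\infty\}$ handles infinite limits. For uniqueness, any other real root would give a second half-branch of $Z$ over an interval. That half-branch would lie in $Z$ but, being distinct from $f$ over that interval, would not lie in $G(f)$, producing infinitely many points of $Z\setminus G(f)$ and contradicting the definition of $\GM$. Irreducibility of $P$ is needed so that every root of $P$ really traces a half-branch in $\{Q=0\}\subset Z$.

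\textbf{Step 3 (conclusion).} We then have $a\in C$. Both $\Phi(a)$ and $f$ are the unique real root of $P$ at generic $t$, so $\Phi(a)=f$ there, and everywhere by continuity. The delicate points to verify carefully are that $Z$, rather than just its one-dimensional part, is the right object for the finiteness count, and that the germs at the infinite cuts $\pm\infty$ and at poles of $f$ behave as claimed. For the latter I would reuse the Puiseux argument given before the statement.
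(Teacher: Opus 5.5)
Your route is the paper's: roots of $P$ in the real closures $\RR(x_\pm)$, $\RR(\pm\infty)$ are read as half-branches, a second root contradicts the finiteness of $Z\setminus G(f)$, and the ring operations are checked on germs. Your Step 1 is a more explicit version of the paper's locality remark. Note, though, that there is no field morphism ``evaluation $\RR(t)\to\RR$ at $t_{0+}$''; what you actually use is the embedding $\RR(t)\to\RR(t_{0+})$ and its unique extension to $\RR(t)(a,b)$.

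The gap is the first sentence of Step 2. An irreducible $Q$ vanishing on all of $G(f)$ does not exist merely because the graph is one-dimensional and semi-algebraic. For instance, the graph of $t\mapsto |t|$ has Zariski closure $\{y^2=t^2\}$, on which no irreducible polynomial vanishes. For $f\in\GM$ such a $Q$ does exist, but this amounts to the irreducibility of $Z$ and must be extracted from the finiteness of $Z\setminus G(f)$. The paper does this explicitly: if $Q_1Q_2$ vanishes on $G(f)$ and $Q_2(t_0,f(t_0))\neq 0$, then $Q_1(t,f(t))=0$ near $t_0$, hence for all $t$. All your later steps lean on it:
\begin{itemize}
\item Existence of a root of $P$ in $\RR(x_\pm)$ uses that the germ of $f$ at $x_\pm$ is a root of $P$. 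You only know this at $+\infty$ unless $P$ vanishes along the whole graph.
\item The inclusion $\{Q=0\}\subseteq Z$ used for uniqueness holds because $Q$ is irreducible (then $(Q)$ is a real prime and $\{Q=0\}=Z$). It does not hold for an arbitrary polynomial vanishing on $G(f)$.
\item Step 3 needs $P(t,f(t))=0$ at generic $t$.
\end{itemize}

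You can close the gap in two ways. The first is to prove that $Z$ is irreducible. A component of $Z$ containing an arc of $G(f)$ cannot stop along the graph: since $Z\setminus G(f)$ is finite, it would otherwise carry a single half-branch at the endpoint, whereas a point of a real algebraic curve carries an even number of them. Points where $f=\pm\infty$ need separate care.

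The second is to sidestep irreducibility. Let $Q$ generate the ideal of an irreducible component $Z'$ of $Z$ containing $G(f)$ over some ray $(M,+\infty)$; this ideal is a height-one prime of $\RR[t,y]$, hence principal. Then $Q(a)=0$ and $\{Q=0\}=Z'\subseteq Z$.
\begin{itemize}
\item Your uniqueness argument shows that $P$ has at most one root in each real closure: two distinct roots give two arcs of $Z$ over an interval, and at each $t$ at least one of them is off $G(f)$.
\item Since $P$ has exactly one root at $+\infty$, $\deg P$ is odd. So $P$ has an odd number of roots, hence exactly one, in every real closure, and $P$ is monoreal.
\item The graph of $\Phi(a)$ lies in $Z'\subseteq Z$ over a cofinite set. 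So $\Phi(a)=f$ off finitely many points, hence everywhere by continuity.
\end{itemize}
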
	
\begin{proof}
	The injectivity of $\Phi$ being obvious, we focus on  surjectivity.
	Let $f\in\GM$ and consider a polynomial $Q$ in 
	$\RR(t)[y]$ such that $Q(f)=0$. One may assume that $Q$ is irreducible. Indeed, if $Q_1(f)Q_2(f)=0$ with $Q_2(t_0,f(t_0))\not=0$, then 
	$Q_1(t,f(t))=0$ in a neighbourhood of
	$t_0$ and hence $Q_1(t,f(t))=0$ for any $t$.
	
	So one gets a monic irreducible polynomial $P$ in $\RR(t)[x]$ such that $P(f)=0$.
	The function $f$ can be evaluated at any $\gamma$ in $\RR(x_+)$, viewed as an half-branch, which shows the existence of a real root of $P$ in $\RR(x_+)$. 
	Moreover, if there exists another real root of $P$ in $\RR(x_+)$, this will give another half-branch of the graph $G(f)$ of $f$ at $x_+$. This is impossible since $G(f)$ coincides with its Zariski closure except but a finite number of points.
	We argue similarly at $x_-$ and $\pm \infty$, so that we get that $P$ is monoreal, hence $\Phi$ is bijective.

	Finally, it is clear to see that $\Phi$ preserves addition and multiplication, since the definition of the function of $\GM$ associated to an element of $C$ is local, and addition and multiplication in $C$ are transported to the corresponding real closure with respect to the different orderings. 
\end{proof}

\subsection{Polyreal extensions}
Considering that totally real extensions are the counterpart of separable extensions and monoreal  extensions are the counterpart of purely inseparable extensions, one does not have a counterpart of the factorization of an algebraic extension into separable and purely inseparable extensions (Example \ref{Ext3} below will produce a counterexample). 

Although, to get some factorization starting with a monoreal extension, we introduce the notion of polyreal extensions (which contain totally real extensions). 

Again, our framework deals with 
algebraic extensions of formally real fields, since unwanted phenomenons appear when our fields are not formally real.

\begin{defn}
Let $K\to L$ be an algebraic extension of formally real fields. 

We say that $K\to L$ is polyreal if any
$a\in L\setminus K$ is not monoreal, namely if its minimal polynomial admits zero or at least two roots in some real closed field extension of $K$.
\end{defn}

The trivial extension of a field is polyreal.

Here is a criterion which motivates the terminology of polyreal extensions:
\begin{prop}\label{Antimonocriterion}
The algebraic extension $K\to L$ is polyreal 
if and only if, for any subfield $M\to L$ finite over $K$, there is $K\to R$ with $R$ real closed which admits at least two liftings to $M$.
\end{prop}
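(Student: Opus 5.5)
We read the statement with $M$ ranging over the subfields finite over $K$ and different from $K$; for $M=K$ every morphism has exactly one lifting, so this case has to be excluded. The plan is to reduce everything to counting real roots of a minimal polynomial, using Proposition \ref{CriterionFiniteMono}, Proposition \ref{MonorealCriterion}, and a parity argument on the number of real roots.

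\emph{Converse direction (criterion implies polyreal).} Suppose some $a\in L\setminus K$ is monoreal over $K$. Put $M=K(a)$; it is finite over $K$ and $M\neq K$. By Proposition \ref{CriterionFiniteMono}, every morphism $K\to R$ into a real closed field has exactly one lifting to $M$. So no morphism has two liftings to $M$, and the criterion fails. Contrapositively, the criterion implies that $K\to L$ is polyreal.

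\emph{Direct direction (polyreal implies criterion).} Let $M\neq K$ be finite over $K$ inside $L$, and write $M=K(a)$ by the primitive element theorem, with $a\notin K$ and minimal polynomial $m_a$ of degree $n$. For a morphism $\phi:K\to R$, the liftings of $\phi$ to $M$ correspond to the roots of $\phi(m_a)$ in $R$. Non-real roots come in conjugate pairs in $R[\sqrt{-1}]$, so the number of liftings is congruent to $n$ modulo $2$. We then split on the parity of $n$.
\begin{itemize}
\item If $n$ is even: $M$ is formally real because $L$ is. So some ordering of $M$ gives a morphism $M\to R$, whose restriction $\phi$ to $K$ has at least one lifting. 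By parity, $\phi$ then has an even, nonzero number of liftings, hence at least two.
\item If $n$ is odd: every $\phi$ has at least one lifting, since an odd-degree polynomial has a root in a real closed field. Since $K\to L$ is polyreal, $a$ is not monoreal. By Proposition \ref{CriterionFiniteMono} there is some $\phi$ whose number of liftings is not exactly one, so it must be at least two.
\end{itemize}

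The only delicate point is the even-degree case. There, non-monoreality could a priori come only from a $\phi$ with zero liftings, and this is why we need the formal reality of $L$ (hence of $M$) to produce a $\phi$ with at least one lifting, which parity then upgrades to two. Without the standing assumption that $L$ is formally real, this direction would fail.
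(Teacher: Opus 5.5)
Your proof is correct and is essentially the paper's argument: both rest on the parity of the number of real roots of the minimal polynomial, and both use the formal reality of $M$ to show that, in the even-degree case, some morphism has at least two liftings (the paper splits on ``zero versus at least two liftings'' rather than on the parity of $n$, which amounts to the same thing). You also usefully make explicit the easy converse and the necessary exclusion of $M=K$, both of which the paper leaves implicit.
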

\begin{proof}
	We know by definition that for any subfield $M\to L$ finite over $K$, there is $K\to R$ with $R$ real closed which admits zero or at least two liftings to $M$.
	
	If it admits zero lifting, then necessarily the degree extension $K\to M$ is even. But, in this case, since $M$ is formally real, one may consider any morphism $M\to R'$ with $R'$ real closed to get an induced morphism $K\to R'$ which admits at least two liftings (again since the degree extension $K\to M$ is even).
\end{proof}	

Recall that if $K\to K(a)$ has even degree, then $a$ is not monoreal over $K$, but it does not necessarily means that 
$K\to K(a)$ is polyreal.

\begin{ex}\label{Ext3}
$\QQ\to \QQ(\sqrt{2})$ is polyreal,
$\QQ\to \QQ(\sqrt[6]{2})$ is not polyreal since 
$(\sqrt[6]{2})^2=\sqrt[3]{2}$ is monoreal over $\QQ$. 

The extension $\RR(x)\to \RR(x)(y)/(y^3-3y+2x)$ is polyreal.
Indeed, its discriminant is $-27\times 4(x^2-1)$, hence it has exactly 3 real roots for any ordering $\alpha$ of $\RR(x)$ satisfying $-1<\alpha<1$.

Any totally real finite extension $K \to L$ is polyreal ; indeed, if it factorizes through a non-trivial monoreal extension $K\to K(a)$, then the maximal number of liftings to $L$ of a morphism from $K$ to a real closed field is greater than the degree of $K(a)\to L$, a contradiction.
\end{ex}

Let us study whether this notion is stable under composition and restriction.

\begin{prop}\label{polyrealComposition}
	Let $K\to L\to M$ be two algebraic polyreal field extensions.
	Then, the extension $K\to M$ is polyreal.
\end{prop}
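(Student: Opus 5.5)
The plan is to argue by contradiction. Suppose $K\to M$ is not polyreal, so there is some $a\in M\setminus K$ which is monoreal over $K$. Write $m_K$ for its minimal polynomial over $K$. By definition $m_K$ has exactly one root in every real closed field extension of $K$. I will split into two cases according to whether $a$ lies in $L$. Throughout, $K$, $L$, $M$ are formally real, as the definition of polyreal extensions requires.

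\emph{Case $a\in L$.} Then $a\in L\setminus K$ is monoreal over $K$. This directly contradicts the assumption that $K\to L$ is polyreal.

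\emph{Case $a\notin L$.} I will show that $a$ is monoreal over $L$. Since $a\in M\setminus L$, this contradicts the polyreality of $L\to M$. Let $m_L$ be the minimal polynomial of $a$ over $L$; it is an irreducible factor of $m_K$ in $L[X]$.

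\emph{Existence of a root.} Apply Lemma \ref{MonorealDiag} to the algebraic extension $K\to M$ of formally real fields, with the subfield $L$ and the element $a\in M\setminus L$, which is monoreal over $K$. The lemma gives that $[L(a):L]=\deg m_L$ is odd. Hence $m_L$ has at least one root in any real closed field extension $R$ of $L$.

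\emph{Uniqueness of the root.} Any such $R$ is also a real closed field extension of $K$, so $m_K$ has exactly one root in $R$. Since $m_L$ divides $m_K$, every root of $m_L$ in $R$ is a root of $m_K$. Therefore $m_L$ has exactly one root in $R$.

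Thus $m_L$ is monoreal, i.e. $a$ is monoreal over $L$. This is essentially the argument of Proposition \ref{MonorealBaseChange}, but carried out directly over $L$, so there is no need to reduce to $L$ finite over $K$.

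There is no serious obstacle. The only point needing care is the hypothesis of Lemma \ref{MonorealDiag}: the ambient field $M$ must be formally real, which holds by our standing assumption. This is precisely where formal reality is used; without it, the parity argument for $\deg m_L$ would fail, as the example following Proposition \ref{MonorealCompositum} shows.
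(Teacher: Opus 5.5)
Your proof is correct, but it goes by a different route from the paper's. The paper proves the contrapositive in the language of liftings. From the polyreality of $L\to M$ and Proposition~\ref{Antimonocriterion} it picks a morphism $L\to R$ with two distinct liftings to $L(a)$. Restricting these to $K(a)$ gives two distinct liftings of $K\to R$, so $a$ is not monoreal over $K$. In that argument, formal reality of $M$ is hidden inside the criterion, where it rules out the case in which $m_L$ has no root at all in $R$; that case says nothing about $m_K$ by mere restriction.

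You instead prove the descent statement ``monoreal over $K$ implies monoreal over any intermediate field $L$'' at the level of polynomials. The divisibility $m_L\mid m_K$ gives at most one root, and Lemma~\ref{MonorealDiag} gives odd degree, hence at least one root, so formal reality enters through that lemma.

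The two arguments establish the same fact read in opposite directions. The paper's is shorter once the lifting criterion is available. Yours bypasses Proposition~\ref{Antimonocriterion} entirely and isolates a reusable statement: it is exactly the step ``since $a$ is monoreal over $K$, it is also over $M$'' used in the uniqueness part of Theorem~\ref{FactorMonoAnti}, and it is the parity-plus-divisibility argument of Proposition~\ref{MonorealBaseChange}.
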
	
\begin{proof}
	Let $a\in M\setminus K$.
	If $a\in L$, then we are done since $K\to L$ is polyreal.
	
	If $a\notin L$, then $L\to L(a)$ is not monoreal by assumption. Using criterion of Proposition \ref{Antimonocriterion}, let us consider $L\to R$ with $R$ real closed which has two different liftings to $L(a)$ (which means that they have distinct images of $a$ in $R$). Then, the induced morphism $K\to R$ admits also two different liftings to $K(a)$ so that $a$ is polyreal over $K$ as expected.
\end{proof}

Concerning restrictions, one has only the obvious following property:

\begin{prop}\label{polyrealRestriction}
	Let $K\to L\to M$ be two field extensions.
	If $K\to M$ is polyreal, then $K\to L$ is polyreal.
\end{prop}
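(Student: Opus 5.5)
This follows directly from the definition of polyreality, because the condition is pointwise on the elements of the extension. I will check that definition for $K\to L$ element by element.

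First, $K\to L$ is an algebraic extension of formally real fields. It is algebraic because $L\subset M$ and $M$ is algebraic over $K$. The field $L$ is formally real because it embeds in $M$, which is formally real, and any ordering of $M$ restricts to an ordering of $L$. So the definition of polyreal extension applies to $K\to L$.

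Now take $a\in L\setminus K$. Then $a\in M\setminus K$. Since $K\to M$ is polyreal, $a$ is not monoreal over $K$, which is exactly the required condition. The key point is that whether $a$ is monoreal over $K$ depends only on its minimal polynomial $m_a\in K[X]$. That polynomial is the same whether $a$ is viewed in $L$ or in $M$. In particular, the count of roots of $m_a$ in real closed field extensions of $K$ is unchanged, so no ambient field is involved. Hence every $a\in L\setminus K$ is non-monoreal over $K$, and $K\to L$ is polyreal.

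I expect no real obstacle here. The only thing to be careful about is to use the element-wise definition rather than the criterion of Proposition \ref{Antimonocriterion}, so that no lifting argument is needed. I would also remark in one sentence why the analogous statement for $L\to M$ is not claimed: for $a\in M\setminus L$, the minimal polynomial over $L$ is only a factor of the one over $K$, and it may have a single real root even when the one over $K$ does not.
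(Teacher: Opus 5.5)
Your proof is correct and matches the paper, which calls this an ``obvious'' property and gives no written proof. The intended argument is exactly yours: whether $a\in L\setminus K\subset M\setminus K$ is monoreal depends only on its minimal polynomial over $K$. You also check that $L$ is formally real and algebraic over $K$, so the definition applies; the paper leaves this implicit.
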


Note that $L\to M$ is not necessarily polyreal by considering the following examples.

\begin{ex}\label{VsPolyCompose}
Let $K=\RR(x)$ and $L_1=K[y]/(y^3+3(x-1)y+2(x+1))$ and
$L_2=K[y]/(y^2-x)$. The discriminant of $L_1$ has the opposite sign than $(x-1)^3+(x+1)=x(x^2-3x+4)$ which is negative for $x>0$ and positive for $x<0$.
As a consequence, any ordering of $K$ associated to an half branch in $\RR_+$ lifts to exactly one ordering in $L_1$ and to two orderings in $L_2$, and any ordering of $K$ associated to an half branch in $\RR_-$ lifts to exactly three orderings in $L_1$ and none in $L_2$. 

Let $M$ be the compositum of $L_1$ and $L_2$ along $K$. Then, $M$ has degree 6 over $K$.
Note that any ordering of $L_2$ lifts to exactly one ordering in $M$, otherwise if it lifts to 3 orderings in $M$, then the induced ordering on $K$ would lift to three different ways to $M$, but it lifts to a single way to $L_1$ and $M$ has degree two over $L_1$, a contradiction.
This shows that $M$ is monoreal over $L_2$.

Moreover, our extension $K\to M$ is polyreal as a composition of polyreal extensions $K\to L_1\to M$. 
\end{ex}
	
\begin{ex}
	Here is a similar example with $K=\QQ(\sqrt{2})$,
	$L_1=K[y]/(y^3+3\sqrt{2}y+2)$ and
	$L_2=K[y]/(y^2-x)$. The sign of the discriminant of $L_1$ is opposite to the sign of $(\sqrt{2})^3+1$, which is negative for the ordering such a that $\sqrt{2}<0$, and positive for
	the ordering such that $\sqrt{2}>0$.
\end{ex}

Note that the same examples prove that polyreal extensions are not stable under base change since $K\to L_1$ is polyreal whereas $L_2\to M$ is monoreal.

Here is another example to show that polyreal extensions are not stable under compositum:

\begin{ex} 	Take as base field $K=\QQ$ and as extensions $L_1=K(a)$ and $L_2=K(b)$, where $a$ and $b$ are the two real roots of the polynomial $P=x^4+x-1$. Then, the compositum in $\RR$
is $\QQ(a,b)$ which has degree 12 over $\QQ$. Note that Ferrari's resolvant of the polynomial $P$ gives an element $s=(a+b)^2$ which has degree 3 over $\QQ$. Indeed, $s$ is the unique real root of the polynomial $x^3+4x-1$. Hence, the extension $\QQ\to \QQ(s)$ is monoreal and the compositum $\QQ(a,b)$ is not polyreal over $\QQ$. \end{ex}

As a consequence, one does not have a good notion of polyreal closure. Nevertheless, one has the following factorization of algebraic morphisms:

\begin{thm}\label{FactorMonoAnti} Any algebraic extension of formally real fields $K\to L$ can be uniquely factorized $K\to \overline{K}\to L$ where $K\to \overline{K}$ is monoreal and $\overline{K}\to L$ is polyreal. \end{thm}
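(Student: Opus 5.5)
Take $\overline{K}$ to be the monoreal closure of $K$ in $L$, i.e. the set of elements of $L$ that are monoreal over $K$. By the proposition on the monoreal closure in $L$, this is a field, $K\to\overline{K}$ is monoreal, and it is the largest monoreal subextension of $L$. What remains is to show that $\overline{K}\to L$ is polyreal, and then to prove uniqueness. Both fields are formally real, since $L$ is, so the definition of polyreality applies.

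\emph{Polyreality of $\overline{K}\to L$.} Let $a\in L\setminus\overline{K}$ and suppose, for a contradiction, that $a$ is monoreal over $\overline{K}$. Then $\overline{K}\to\overline{K}(a)$ is monoreal by Proposition \ref{MonorealCriterion}. Since $K\to\overline{K}$ is also monoreal, Proposition \ref{MonorealComposition} shows that $K\to\overline{K}(a)$ is monoreal. In particular $a$ is monoreal over $K$, so $a\in\overline{K}$, which is a contradiction. This step is straightforward: it only uses the composition result, which already handles arbitrary algebraic extensions via Proposition \ref{LiftingAlgebraic}.

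\emph{Uniqueness.} Suppose $K\to M\to L$ is another factorization with $K\to M$ monoreal and $M\to L$ polyreal. By maximality of $\overline{K}$ we have $M\subseteq\overline{K}$. For the reverse inclusion, take $a\in\overline{K}$. Then $K\to\overline{K}$ is monoreal, and the restriction result, Proposition \ref{MonorealRestriction} applied to $K\to M\to\overline{K}$, shows that $M\to\overline{K}$ is monoreal. Hence $a$ is monoreal over $M$. If $a\notin M$, this contradicts polyreality of $M\to L$, because $a\in L\setminus M$. Therefore $a\in M$, and so $M=\overline{K}$.

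The main point needing care is that uniqueness is meant as equality of subfields of $L$, not merely as isomorphism. This holds here because both candidate intermediate fields sit inside $L$, and the inclusion $M\subseteq\overline{K}$ comes directly from the definition of $\overline{K}$ as the set of all elements of $L$ that are monoreal over $K$. I therefore do not expect a genuine obstacle. The only thing to check is that Proposition \ref{MonorealRestriction} requires no finiteness hypothesis, and indeed its proof works elementwise.
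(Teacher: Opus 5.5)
Your proof is correct and follows the paper's argument. You take $\overline{K}$ to be the monoreal closure of $K$ in $L$, obtain polyreality of $\overline{K}\to L$ by contradiction, and obtain uniqueness from $M\subseteq\overline{K}$ together with the fact that elements of $\overline{K}$ are monoreal over $M$, making explicit the appeals to Propositions \ref{MonorealComposition} and \ref{MonorealRestriction} that the paper leaves implicit.
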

\begin{proof}
Let us define $\overline{K}$ as the monoreal closure of $K$ in $L$. Clearly, $K\to \overline{K}$ is monoreal. Let $a\in L\setminus \overline{K}$. If $a$ were monoreal over $\overline{K}$, then it would be also over $K$, a contradiction. Hence, $\overline{K}\to L$ is polyreal as expected.

Take another such factorization 
$K\to M\to L$. Since $K\to M$ is monoreal, it embeds into $\overline{K}$. Conversely, let $a\in \overline{K}$. Since $a$ is monoreal over $K$, it is also over $M$, but by assumption it is also polyreal over $M$, hence $a\in M$ and we are done.
\end{proof}

Note that we do not have such a result if we reverse the order of monoreal and polyreal extensions. Indeed, let us consider again Example \ref{VsPolyCompose}. One has 
two non-isomorphic decompositions $K\to M\to M$ and $K\to L_2\to M$ where $K\to M$ and $K\to L_2$ are polyreal and $L_2\to M$ is monoreal. Even the existence is not provided ; indeed, take the extension $\QQ\to\QQ(\alpha)\to \QQ(\sqrt{\alpha)}$ where $\alpha$ is the unique (positive) real root of $P=x^3+x-1$.
It does not factorize through a polyreal extension since $x^6+x^2-1$ does not factors through a polynomial of degree $3$ over a real quadratic extension of $\QQ$.

\section{Relation with the real spectrum}\label{sec-sper}

In all this section, $K$ stands for a formally real field.

We aim to characterize which conditions on an algebraic field extension $K\to L$ guaranty the bijectivity of the map $\Spr L \to \Spr K$ induced at the level of the real spectrum.

As already mentioned in the introduction, to require bijectivity between real objects is a quite restrictive condition, and it leads naturally to put rather the stress on injectivity. In the spirit of the monoreality introduced in the first section, we could try  to define the notion of an 
``at most monoreal'' polynomial $P\in K[X]$, saying that it admits at most one real root in any real closed extension of $K$. And likewise an ``at most monoreal'' finite extension of fields $K\to L=K(a)$ would be a finite extension such that the minimal polynomial of $a$ over $K$ is at most monoreal. However, such a notion does not behave well, for instance it is not stable under restriction.
So in order to put the stress on injectivity, we work rather with the uniqueness in the lifting criterion for monoreal finite extension given by Proposition \ref{CriterionFiniteMono}. 

We begin this section with some recalling on the real spectrum, then we discuss the existence and uniqueness in the lifting property, and finally we introduce a real version of the radiciality condition introduced by Grothendieck \cite{Gr1}, which seems to us the relevant condition to discuss injectivity in this context.

\subsection{The real spectrum}

The real spectrum $\Sp_r K$ of a field $K$ can be defined as the set of all its (total) orderings. A point of $\Sp_r K$ can be though equivalently as :
\begin{enumerate}
\item an ordering compatible with the field operations, 
\item a positive cone, which is a subset $P$ of $K$ not containing -1, containing all squares, stable under addition, multiplication, and such that $K=P\cup -P$,  
\item a morphism from $K$ into a real closed field.
\end{enumerate}

For an element $a\in K$, we denote by $\{a>0\}$ the set of morphisms $\alpha:K\to R$, with $R$ real closed, such that $\alpha(a)>0$ for the unique ordering in $R$. The real spectrum is endowed with a topology, with a basis of open neighbourhoods given by the sets $\{ a_1>0,\ldots , a_r>0\}$.

One may also define the real spectrum of a ring as couples $(\p,\leq)$ where $\p$ is a prime ideal of $A$ and $\leq$ is an ordering the residual field 
$k(\p)$. Equivalently, a point $\alpha$ in $\Sp_rA$ can be viewed as a morphism from $A$ to a real closed field $R$. Again, one has a natural topology on $\Sp_rA$. For more details, we refer to \cite{BCR}.\\

One point we would like to emphasize concerns amalgamation of real closed fields in relation with the definition of the real spectrum. 
In fact, to be precise, a point of the real spectrum is an equivalence class of morphisms into  real closed fields. The equivalence relation is the relation generated by the following condition: two morphisms $\alpha:A\to R_1$ and $\beta:A\to R_2$ give rise to the same point in $\Sp_rA$ if there is a morphism $\phi: R_1\to R_2$ such that $\beta=\phi\circ\alpha$. In practice it is more convenient to compare morphisms with a common target field. Actually, 
if  $\alpha_1:A\to R_1$ and $\alpha_2:A\to R_2$ give rise to the same point in $\Sp_rA$, then there are two morphisms 
$\beta_1: R_1\to S$ and $\beta_2: R_2\to S$
where $S$ is a real closed field and such that $\beta_1\circ\alpha_1=\beta_2\circ\alpha_2$. This fact is a consequence of the amalgamation property for real closed fields, which can be stated as follow  (for a proof, see \cite[Ex 1.4.3 P35]{Sch}). 

\begin{prop}\label{Amalgamation}
	Let $\alpha_1: R\to R_1$ and $\alpha_2: R\to R_2$ two real closed fields extensions of a real closed field $R$. Then, there is a common real closed  extension $S$ of $R_1$ and $R_2$, namely two morphisms 
	$\beta_1: R_1\to S$ and $\beta_2: R_2\to S$, where $S$ is a real closed field and such that $\beta_1\circ\alpha_1=\beta_2\circ\alpha_2$.
\end{prop}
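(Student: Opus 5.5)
The plan is to prove the amalgamation property for real closed fields by compactness, using the model completeness of the theory of real closed fields (equivalently, quantifier elimination in the language of ordered fields). The idea is to embed $R_1$ and $R_2$ simultaneously into some real closed field $S$, compatibly over $R$.

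First, identify $R$ with its images in $R_1$ and $R_2$, and rename elements so that $R_1\cap R_2=R$. Consider the language of ordered fields enriched with constants for all elements of $R_1\cup R_2$. Let $T$ be the theory consisting of:
\begin{itemize}
\item the axioms of real closed fields;
\item the atomic diagram of $R_1$, that is, all polynomial equalities, inequalities and sign conditions with parameters from $R_1$ that hold in $R_1$;
\item the analogous atomic diagram of $R_2$.
\end{itemize}
Since the constants of $R$ are shared, any model $S$ of $T$ yields ordered-field embeddings $\beta_i:R_i\to S$ with $\beta_1\circ\alpha_1=\beta_2\circ\alpha_2$. The ordering on a real closed field is definable (the positives are the nonzero squares), so ordered embeddings are just field morphisms, as required.

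By compactness, it suffices to show that $T$ is finitely satisfiable. A finite fragment involves finitely many elements $a=(a_1,\dots,a_n)\in R_1$ and $b=(b_1,\dots,b_m)\in R_2$. It asserts a quantifier-free condition $\varphi(a)$ with parameters in $R$ that holds in $R_1$, and a condition $\psi(b)$ with parameters in $R$ that holds in $R_2$.

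The key step is to realize both conditions inside $R_2$. The sentence $\exists x\,\varphi(x)$ has parameters in $R$ and holds in $R_1$. By model completeness of real closed fields, $R\preceq R_1$, so it holds in $R$. Hence there is $a'\in R^n$ with $\varphi(a')$. Now interpret the $a_i$ by $\alpha_2(a')$ and the $b_j$ by $b$ in $R_2$; this satisfies the fragment. If some $a_i\in R$, it already has a fixed value, and $\varphi$ is then simply a formula in the remaining variables, so the argument is unchanged.

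This step, the transfer of existential statements from $R_1$ down to $R$, is the main point. It rests on Tarski's quantifier elimination. Alternatively, one can use the Artin–Lang type fact that $R$ is existentially closed in any ordered extension.

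Finally, compactness gives a model $S$ of $T$, which is a real closed field, together with the required morphisms $\beta_1$ and $\beta_2$. For an algebraic alternative avoiding logic, one could form $R_1\otimes_R R_2$, show that it has a real prime ideal (again by Artin–Lang applied to finitely generated subalgebras), and take the real closure of the residue field for some ordering. We would follow the compactness route, since it is shorter.
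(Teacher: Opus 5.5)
Your proof is correct. The paper gives no argument of its own for this statement; it only refers to \cite[Ex.~1.4.3]{Sch}. Your compactness argument is therefore an independent, complete proof.

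The bookkeeping is right. The shared constants for $R$ force $\beta_1\circ\alpha_1=\beta_2\circ\alpha_2$. The two atomic diagrams impose no relation between the new constants of $R_1$ and those of $R_2$, so realizing a finite fragment entirely inside $R_2$, with the $R_1$-constants sent to $\alpha_2(a')$, is legitimate.

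The only substantive input is that $R$ is existentially closed in $R_1$. You apply $R\preceq R_1$ only to $\exists x\,\varphi(x)$ with $\varphi$ quantifier-free, and you obtain it from model completeness, i.e.\ ultimately from Tarski's quantifier elimination.

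The tensor-product route you mention uses the same input in algebraic form: Artin--Lang applied to a finitely generated $R$-subalgebra of $R_2$. Its advantage is that it speaks the paper's language, since amalgamation over $R$ is literally the statement $\Sp_r(R_1\otimes_R R_2)\neq\emptyset$. This matches how $\Sp_r(B\otimes_A B)$ is used in Proposition~\ref{rrsaturdeux}.

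If you want to avoid any transfer principle, there is also a purely field-theoretic proof. Climb a transcendence basis of $R_2$ over $R$ one element at a time:
\begin{itemize}
\item an ordering of $R(t)$ is determined by the cut of $t$ in $R$, because polynomials over $R$ split into linear factors and positive definite quadratics;
\item any cut of $R_1$ lying over that cut gives a compatible ordering of $R_1(t)$;
\item the universal property of the real closure lets you replace $R$ and $R_1$ by the corresponding real closures and iterate, taking unions at limit stages.
\end{itemize}
Your argument is the shortest once quantifier elimination is available, while the cut argument is the most self-contained.
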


In the situation above, it suffices to use the amalgamation property
with the real closure $R$ of the common ordering on the residue field at the kernel of the morphisms $\alpha_1$ and $\alpha_2$.

\subsection{The lifting property}

We say that an extension $K\to L$ satisfies the lifting property if any morphism $K\to R$ into a real closed field $R$ admits exactly one lifting to $L$. 
It is an instance of the so-called substitution property, which is an important notion in real algebraic geometry. One says that the ring extension $A\to B$ satisfies the substitution property if any morphism $A\to R$, where $R$ is a real closed field, admits exactly one lifting to $B$ \cite{FMQ-subs}.
One sees by Proposition \ref{LiftingAlgebraic} that when $A$ and $B$ are formally real fields and the extension is algebraic, the  monoreality property implies the substitution one.
In fact, this substitution property is valid when $A=\Pol(V)$ is the ring of polynomials functions on a non-singular real algebraic set $V$ and $B=\Na (V)$ is the ring of Nash functions on $V$ (\cite[Th 8.5.2]{BCR}). 
Roughly speaking, it means that Nash functions can be uniquely evaluated at any point of $V$.
The substitution property is also valid when $A=\Pol(V)$ is the ring of polynomials on a real algebraic set $V$ and $B=\S(V)$ is the ring of semi-algebraic functions on $V$. Beware that this property is not stable by restriction since it does not remain valid for some subrings of $\S(V)$. For others geometric substitution properties, we refer to \cite{FMQ-subs}.\\

In our setting of an algebraic extension, we will study apart the existence and the uniqueness in the lifting property.

In fact, the uniqueness in the lifting property corresponds to a real version of the radiciality introduced by Grothendieck \cite{Gr1} (see also \cite{BFMQ}). So we rephrase the uniqueness in the lifting property as a real radiciality as follows:

\begin{defn}
Let $K\to L$ be a field extension. We say that $K\to L$ is real radicial if, for any real closed field $R$, and any field homomorphisms $K\stackrel{i}{\to} L\stackrel{\phi_1}{\to}R$
and $K\stackrel{i}{\to} L\stackrel{\phi_2}{\to}R$
such that $\phi_1\circ i=\phi_2\circ i$, then $\phi_1=\phi_2$.	
\end{defn}

Beware that here $L$ is not necessarily formally real. In fact, any $K\to L$ is real radicial as soon as $L$ is not formally real. This shows  that the condition of real radiciality is not stable under restriction, as illustrated by the extensions $\QQ\to \QQ(\sqrt{2})
\to \QQ(i,\sqrt{2})$. \\

 We begin with describing the relationship between these notions and conditions on the map induced at the real spectrum level. 

\begin{prop}\label{LiftSpecrBij} Let $K\to L$ be an algebraic extension. 
\begin{enumerate}
\item Any morphism $K\to R$ into a real closed field $R$ admits a lifting to $L$  if and only if 	the induced map on the real spectrum is surjective.
\item The extension $K\to L$ is real radicial 
 if and only if 	the induced map on the real spectrum is injective.
\end{enumerate} 
\end{prop}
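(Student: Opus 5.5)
The plan is to use the description of points of $\Spr K$ and $\Spr L$ as equivalence classes of morphisms into real closed fields, together with the amalgamation property (Proposition \ref{Amalgamation}). Since $K$ and $L$ are fields, every morphism into a real closed field is injective, so there is no kernel to track. A point of $\Spr K$ is then just an ordering $\le$ of $K$. Any morphism $K\to R$ inducing that ordering factors, up to the equivalence relation, through the real closure $R_\le$ of $(K,\le)$. The map $\Spr L\to\Spr K$ sends the class of $\psi:L\to R$ to the class of $\psi\circ i$.

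For (1), assume first that every morphism $K\to R$ lifts to $L$. Given an ordering of $K$, realised by $K\to R$, a lifting $L\to R$ gives a preimage, so the map is surjective. Conversely, let $\phi:K\to R$ be arbitrary and let $\psi:L\to R'$ be a preimage of the point defined by $\phi$. Then $\psi\circ i$ and $\phi$ define the same ordering on $K$, so by amalgamation there are $\beta:R'\to S$ and $\beta':R\to S$ with $\beta\circ\psi\circ i=\beta'\circ\phi$. We still have to land in $R$ itself. Here algebraicity enters: $L$ is algebraic over $K$, so the image of $\beta\circ\psi$ lies in the real closure of $\beta'\phi(K)$ inside $S$. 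That real closure is the relative algebraic closure of $\beta'(\phi(K))$ in $S$, and it is contained in $\beta'(R)$, because $\beta'(R)$ is real closed and hence relatively algebraically closed in $S$. Thus $\beta\circ\psi$ factors through $\beta'$ and yields a lifting $L\to R$ of $\phi$.

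For (2), assume $K\to L$ is real radicial and take two points of $\Spr L$, given by $\psi_1:L\to R_1$ and $\psi_2:L\to R_2$, with the same image in $\Spr K$. Amalgamating over $K$ as above gives morphisms into a common real closed field $S$ that agree on $K$. Real radiciality then makes them equal on $L$, so the two points coincide; hence the map is injective. Conversely, suppose the map is injective and let $\phi_1,\phi_2:L\to R$ agree on $K$. Injectivity says that $\phi_1$ and $\phi_2$ induce the same ordering on $L$. Both therefore factor through the real closure $R_L$ of $L$ for that ordering, with images algebraic over $\phi_j(K)=\phi(K)$. Both land in the real closure $R_0$ of $\phi(K)$ inside $R$. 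The two maps $R_L\to R_0$ are $K$-embeddings of real closed fields that are algebraic over $K$; by uniqueness of the real closure up to unique order-isomorphism over $K$, they coincide. Hence $\phi_1=\phi_2$.

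The main obstacle is this last step: passing from equality of orderings to equality of the actual morphisms into a fixed $R$, together with the analogous factorisation through $\beta'$ in (1). Both rely on algebraicity of $L$ over $K$ and on the rigidity of real closures, namely that a real closure admits no nontrivial automorphisms over the base. I would present that rigidity as a small lemma first and then apply it in both parts.
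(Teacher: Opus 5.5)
Your proof is correct. Part (1) and the implication ``real radicial $\Rightarrow$ injective'' follow the paper's route: amalgamate over the real closure of the common ordering of $K$, then use algebraicity of $L$ over $K$ to bring the lifting back into $R$. In (1) you also spell out what the paper compresses into ``factor through $k(\alpha)$ by the universal property'', namely that $\beta'(R)$ is relatively algebraically closed in $S$.

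You diverge only in ``injective $\Rightarrow$ real radicial''. The paper argues one element at a time: it restricts $\phi_1,\phi_2$ to $K(a)$ and invokes \cite[Prop 1.3.7]{BCR} to say that the common ordering of $K(a)$ determines which root of $m_a$ in $R$ receives $a$. You argue globally. Both maps extend to the real closure $R_L$ of $L$ for the common ordering. Since $L/K$ is algebraic, $R_L$ is also a real closure of $K$, so uniqueness of $K$-embeddings of a real closure forces the two extensions to agree. Both arguments rest on the same rigidity of real closures, and the root-versus-ordering correspondence the paper cites can itself be deduced from it. The paper's version stays with simple extensions and an off-the-shelf citation; yours handles an infinite algebraic $L$ in one stroke, with no reduction to simple subextensions.

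You can drop the detour through $R_0$. The universal property of $R_L$ as a real closure of $K$ already gives exactly one order-preserving embedding $R_L\to R$ restricting to $\phi_1\circ i=\phi_2\circ i$ on $K$. Any field morphism between real closed fields preserves order, so the two extensions coincide immediately. In particular, the ``small lemma'' you plan is just the standard uniqueness part of the real closure theorem and need not be reproved.
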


\begin{proof}\begin{enumerate}
\item 
Assume first the existence in the lifting property holds.
A point of the real spectrum of $K$ is given by a morphism $\phi : K\to R$ into a real closed field $R$. This morphism $\phi$ lifts to $L$ which proves the surjectivity on the real spectrum.

Conversely, let $\phi:K\to R$ be a morphism into a real closed field $R$, inducing an ordering $\alpha$ on $K$. 
By surjectivity on the real spectrum, there is a morphism $L\to S$ which induces  $\alpha$, and we may assume that $S$ is a real closed field extension of $R$ by amalgamation.
Since $K\to L$ is algebraic, one may factor the morphism $L\to S$ through  the real closure $k(\alpha)$ of $\alpha$ (see \cite{BCR} for a definition of the real closure), by the universal property of the real closure. We obtain this way a lifting $L\to k(\alpha) \to R$ as required. 

\item Assume that the uniqueness in the lifting property holds. Let us consider two morphisms 
$\phi_1:L\to R_1$ and $\phi_2:L\to R_2$ which coincides as orderings when restricted to $K$.
By amalgamation, one may assume that $R_1=R_2$. By unicity of the lifting, one gets the injectivity on the real spectrum.

Conversely, let $\phi_1:L\to R$ and  $\phi_2:L\to R$ be morphisms in a real closed field $R$ with $\phi_1\circ i=\phi_2 \circ i$. Then  $\phi_1$ and $\phi_2$ represent the same element of $\Sp_r L$ by injectivity at the real spectrum level, and we aim to prove that they are equal. Let $a\in L$, and consider $\phi_1$ and $\phi_2$ on restriction to the finite extension $K(a)$ of $K$. 
By Proposition \cite[Prop 1.3.7]{BCR}, 
$\phi_1$ and $\phi_2$ correspond to the same choice of a root of the minimal polynomial of $a$ in $R$, so $\phi_1=\phi_2$ as expected.
\end{enumerate}
\end{proof}

The following result clarifies the relationship between the monoreality condition introduced in subsection \ref{sect-mono} and bijectivity at the real spectrum level.

\begin{thm}\label{MonoSpecrBij}
	A finite extension $K\to L$ is monoreal if and only if 
	the induced map on the real spectrum is bijective.
	If an algebraic extension $K\to L$ is monoreal, then its induced map on the real spectrum is bijective.
\end{thm}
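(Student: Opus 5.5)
The plan is to reduce everything to the lifting characterizations already available, namely Proposition \ref{CriterionFiniteMono}, Proposition \ref{LiftingAlgebraic} and Proposition \ref{LiftSpecrBij}.

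\emph{Finite case.} Since $K$ has characteristic zero, the finite extension $K\to L$ is simple, say $L=K(a)$. By Proposition \ref{MonorealCriterion}, $K\to L$ is monoreal if and only if $a$ is monoreal over $K$. By Proposition \ref{CriterionFiniteMono}, this holds if and only if every morphism $K\to R$ into a real closed field admits exactly one lifting to $L=K(a)$. This last condition splits into two parts:
\begin{itemize}
\item existence of a lifting for every such morphism, which by Proposition \ref{LiftSpecrBij}(1) is equivalent to surjectivity of $\Spr L\to\Spr K$;
\item uniqueness of the lifting, which is exactly real radiciality and so by Proposition \ref{LiftSpecrBij}(2) is equivalent to injectivity of $\Spr L\to\Spr K$.
\end{itemize}
Chaining these equivalences gives the first statement.

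\emph{Algebraic case.} Assume $K\to L$ is monoreal and algebraic. Proposition \ref{LiftingAlgebraic} shows that every morphism $K\to R$ into a real closed field admits exactly one lifting to $L$. Existence then yields surjectivity on real spectra by Proposition \ref{LiftSpecrBij}(1), and uniqueness (real radiciality) yields injectivity by Proposition \ref{LiftSpecrBij}(2). Both parts of Proposition \ref{LiftSpecrBij} were stated for algebraic extensions, so they apply here.

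The only step needing care is the dictionary between "exactly one lifting" and "bijective on $\Spr$". Two morphisms into different real closed fields may represent the same point, so "unique lifting" has to be read correctly. This is already handled inside Proposition \ref{LiftSpecrBij} through amalgamation (Proposition \ref{Amalgamation}) and the factorization through the real closure $k(\alpha)$. I therefore expect no real obstacle beyond checking that the hypotheses of those propositions hold, in particular that $K$ is formally real. The converse in the algebraic case is not claimed, and it is false by the example $\QQ\to\RR_{\rm alg}$.
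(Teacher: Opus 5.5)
Your proof is correct and follows the paper's argument: the finite case combines Proposition \ref{CriterionFiniteMono} with both parts of Proposition \ref{LiftSpecrBij}, and the algebraic case applies Proposition \ref{LiftingAlgebraic} followed by Proposition \ref{LiftSpecrBij}. Your explicit reduction to a primitive element via Proposition \ref{MonorealCriterion} simply spells out a step the paper leaves implicit.
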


\begin{proof}
The first point comes from the lifting criterion for finite extension in Proposition \ref{CriterionFiniteMono} together with Proposition \ref{LiftSpecrBij}. For algebraic extension, monoreality implies the lifting criterion by Proposition \ref{LiftingAlgebraic}, hence the result by Proposition \ref{LiftSpecrBij}.
\end{proof}

As already seen, the converse implication of the second statement does not hold. Indeed, consider the extension $\QQ\to \RR_{\rm{alg}}$; it induces a bijection on the real spectrum, but is not monoreal.

Note that the bijection of Theorem \ref{MonoSpecrBij} is in fact a homeomorphism on the real spectrum.

\begin{prop}\label{BijIsHomeo}
An algebraic real radicial extension induces a homeomorphism onto its image in the real spectrum. In particular, an algebraic monoreal extension induces a homeomorphism.
\end{prop}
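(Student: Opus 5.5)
We argue directly with the basic open sets of the real spectrum. Let $i:K\to L$ be algebraic and real radicial, and $\rho:\Spr L\to\Spr K$ the induced map. It is continuous, since $\rho^{-1}(\{a_1>0,\ldots,a_r>0\})=\{i(a_1)>0,\ldots,i(a_r)>0\}$, and it is injective by Proposition \ref{LiftSpecrBij}. So it remains to show that $\rho$ is open onto its image. Concretely, for every basic open set $U=\{b_1>0,\ldots,b_r>0\}$ of $\Spr L$ we want to find an open set $V\subset\Spr K$ with $\rho(U)=V\cap\rho(\Spr L)$.

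\emph{Reduction to the finite case.} The elements $b_1,\ldots,b_r$ lie in a finite subextension $M=K(c)\subset L$, with $c$ a primitive element. The extension $K\to M$ is again real radicial. Indeed, two liftings of $K\to R$ to $M$ that are restrictions of orderings of $L$ must coincide: apply injectivity to the orderings of $L$ extending them. More precisely, the points of $\rho(\Spr L)$ have exactly one preimage in $\Spr M$ lying in the image of $\Spr L$. It is cleanest to argue pointwise. Take $\beta\in U$ and set $\alpha=\rho(\beta)$, realised as $\phi:K\to R$ with $R=k(\alpha)$ real closed. Let $m$ be the minimal polynomial of $c$ over $K$. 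Its real roots in $R$ correspond to the liftings of $\phi$ to $M$, one of which (say $\xi_1$) is the restriction of $\beta$. Each $b_j$ is a polynomial expression $g_j(c)$ with $g_j\in K[X]$. By real radiciality of $K\to L$, every ordering of $L$ above $\alpha$ restricts to $M$ as the root $\xi_1$. The other roots $\xi_2,\ldots,\xi_s$ of $m$ in $R$ do not come from $\Spr L$ above $\alpha$.

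\emph{Main step: Thom encoding.} We use the standard semialgebraic description of the roots: the root $\xi_1$ is characterised by the signs of the derivatives $m',m'',\ldots$ at $\xi_1$ (Thom's lemma, \cite{BCR}). The existence of a root $x$ of $m$ with prescribed signs of the $m^{(k)}(x)$ and with $g_j(x)>0$ for all $j$ is a first order formula over $K$. By quantifier elimination (Tarski--Seidenberg) it is equivalent to a boolean combination of sign conditions on elements of $K$, i.e.\ to a constructible set $W\subset\Spr K$. We then intersect $W$ with $\rho(\Spr L)$. A point $\alpha'\in W\cap\rho(\Spr L)$ has a root $x$ with the prescribed properties, but we need this root to be the one coming from $L$. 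This is the main obstacle: $W$ is only constructible and may pick the wrong root. To fix it, we choose the formula \emph{for every root $x$ of $m$, $g_j(x)>0$ for all $j$}, restricted to the roots lying in the image of $L$.

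The restriction to roots coming from $L$ is not expressible over $K$ directly. So instead we prove openness through a compactness argument. The set $\rho(U)$ is pro-constructible, being the image of a pro-constructible set under a spectral map, and $\rho(\Spr L\setminus U)$ is pro-constructible and disjoint from $\rho(U)$ by injectivity. Since $\rho(\Spr L)$ is pro-constructible in $\Spr K$, hence spectral, and $\rho$ is a continuous bijection onto it, $\rho(U)$ is constructible in the image. It then remains to show that $\rho(U)$ is stable under generalisation inside the image. This holds because $\rho$ commutes with specialisation, and a specialisation in $\Spr K$ of points of the image lifts for an algebraic extension (specialisations lift along integral/algebraic maps, \cite{BCR}), uniquely by injectivity. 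A constructible set of a spectral space that is stable under generalisation is open. This would finish the proof, and the last sentence of the statement then follows from Theorem \ref{MonoSpecrBij}.
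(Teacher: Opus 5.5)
Your final argument is correct, but it takes a much heavier route than the paper. The paper uses only the fact that the real spectrum of a field is compact, Hausdorff and totally disconnected. Then $\rho\colon\Spr L\to\Spr K$ is a continuous injection from a compact space into a Hausdorff one, hence a closed map, hence a homeomorphism onto its image.

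Your spectral-space argument is the general version of the same fact: pro-constructible images, constructibility of $\rho(U)$ inside the image, and ``constructible and stable under generalisation implies open''. What it buys is scope. It would still work for ring maps whose real spectrum map is injective and which lift specialisations, for instance integral extensions, where real spectra are no longer Hausdorff.

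For fields, however, your argument collapses to the paper's. Since $\Spr K$ is Hausdorff, there are no non-trivial specialisations, so your generalisation step is vacuous. Moreover, the constructible topology coincides with the Harrison topology, so ``pro-constructible'' just means ``closed''. Your argument then reads: $\rho(U)$ and $\rho(\Spr L\setminus U)$ are compact, hence closed, and they partition the image, so $\rho(U)$ is open in the image.

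You should delete the first two paragraphs, which play no role in the final argument. In particular, the assertion that $K\to M$ is again real radicial is false as stated, since real radiciality is not stable under restriction. In Example \ref{ex-root}, $\RR(X)\to L$ is real radicial but $\RR(X)\to\RR(X^{1/2})$ is not; see also $\QQ\to\QQ(\sqrt{2})\to\QQ(i,\sqrt{2})$. Only your weaker ``more precisely'' reformulation is true. The Thom-encoding attempt is likewise abandoned and can go.
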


\begin{proof} The proof of the two cases are similar, so let us deal with the case of a monoreal extension $K\to L$. Denote by $\phi$ the induced induced application onto the real spectrum
$\Sp_r L\to \Sp_r K$ .
Then, $\phi$ is continuous and bijective. Moreover, the real spectrum of any field is compact and a totally disconnected Haussdorf space (\cite[Chap. 3,§2, Lemma 2.8]{HM}, and hence $\phi$ is an homeomorphism.
\end{proof}

\begin{rem}
Let $K\to L$ be monoreal. One may describe in a constructive way the image of an open subset by the induced map on the real spectrum $\Sp_r L\to \Sp_r K$. Let $U=\{b_1>0,\ldots,b_n>0\}$ be a basic open subset in $\Sp_r L$ with $b_1,\ldots,b_n$ in $L$.
Its image is the set of restrictions to $K$ of morphisms $\beta:L\to R$ satisfying $\beta(b_1)>0,\ldots,\beta(b_n)>0$, where $R$ is a real closed field.

Consider the minimal polynomial $m_i$ of $b_i$ over $K$. Forgetting the subscript $i$ for simplicity, write it in the form $x^{n_0}+a_{n_1}x^{n_1}+\ldots a_{n_{r}}x^{n_{r}}+
a_{n_{r+1}}$, where all $a_k$'s are non zero.

Since $K\to K(b_i)$ is monoreal, each of these polynomials $m_i$ admits exactly one real root in $R$.

Descartes's rule of signs says that the number of real roots of $m_i$ and the sign variations in the sequence $(a_{n_1},\ldots,a_{n_r})$ have the same parity. 
Hence, the condition that the unique real root of $m_i$ in $R$ is non negative is given by a disjunction of conjunction of sign variations conditions, namely of the form $a_{n_{k}}a_{n_{k+1}}<0$. This condition defines an open subset of $\Sp_rK$ which we denote by $V$. 

We may conclude that the image of 
$U$ is contained in $V$. The reverse inclusion comes from the surjectivity on the real spectrum since $\phi$ is monoreal.	
\end{rem}

Finite real radicial extensions lead to the following alternative: 
 \begin{prop}\label{FiniteAtMost}
Let $K\to L$ be a finite real radicial extension. Then, either it has odd degree and it is monoreal, either 
it has even degree and it is not monoreal and in this latter case, $L$ is not formally real.  	
 \end{prop}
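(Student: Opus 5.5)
Write $L=K(a)$ by the primitive element theorem (characteristic zero, since $K$ is formally real), with $m_a$ the minimal polynomial of degree $n=[L:K]$. By Proposition \ref{CriterionFiniteMono}, for every morphism $\phi:K\to R$ into a real closed field, the liftings of $\phi$ to $L$ correspond bijectively to the roots of $m_a^\phi$ in $R$. Real radiciality says exactly that each such $\phi$ has at most one lifting. So for every $\phi$, the polynomial $m_a^\phi$ has at most one root in $R$. I will split according to the parity of $n$.

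\emph{Odd degree.} If $n$ is odd, then $m_a^\phi$ is a real polynomial of odd degree over the real closed field $R$, so it has at least one root in $R$. Combined with the bound above, it has exactly one root for every $\phi$. Hence $m_a$ has exactly one root in every real closed field extension of $K$. Note that such an extension $R$ of $K$ is precisely a morphism $K\to R$. Thus $m_a$ is monoreal, and Proposition \ref{MonorealCriterion} gives that $K\to L$ is monoreal.

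\emph{Even degree.} If $n$ is even, then $K\to L$ is not monoreal, since monoreal polynomials have odd degree (as observed right after the definition). It remains to show that $L$ is not formally real. Suppose instead that $L$ admits a morphism $\psi:L\to R$ into a real closed field, and set $\phi=\psi\circ i$. Then $m_a^\phi$ has the root $\psi(a)$ in $R$. By the bound above, this is its only root in $R$, and it is simple because $m_a$ is separable. Every other root lies in $R[\sqrt{-1}]\setminus R$, and these non-real roots come in conjugate pairs, so their number is even. The total number of roots is therefore $1+(\text{even})$, which is odd and contradicts $n$ even. Hence $L$ is not formally real.

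The argument is essentially bookkeeping. The only point needing care is this parity count in the even case, where separability of $m_a$ is what guarantees that the unique real root is simple.
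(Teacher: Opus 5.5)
Your proof is correct and follows the paper's argument. Real radiciality caps the number of liftings, equivalently the roots of $m_a^\phi$ in $R$, at one. Odd degree forces a lifting to exist, and in even degree the parity of the number of real roots rules out exactly one, so $L$ admits no ordering. The only difference is phrasing: you argue through roots of the minimal polynomial (Propositions \ref{CriterionFiniteMono} and \ref{MonorealCriterion}) where the paper uses orderings and Theorem \ref{MonoSpecrBij}, and you make explicit the separability and parity count that the paper leaves implicit.
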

 
In other words, for finite real radicial extensions either there is always one lifting to $L$ for any $K\to R$ with $R$ real closed, or there is no lifting to $L$ for any $K\to R$ with $R$ real closed.

\begin{proof}[Proof of Proposition \ref{FiniteAtMost}]
If it has odd degree, then any ordering on $K$ lifts to $L$, and since $K\to L$ is real radicial, it lifts uniquely. By Theorem \ref{MonoSpecrBij}, $K\to L$ is monoreal.

If it has even degree, then any ordering on $K$ does not lift to $L$, otherwise there would exists at least two liftings, contradicting that $K\to L$ is real radicial. Then, there is no ordering on $L$ compatible with those of $K$, and hence there is no ordering on $L$, namely $L$ is not formally real.
\end{proof}	

For an algebraic extension, the situation may be more involved. Indeed, we may have a real radicial extension $K\to L$  with $L$ formally real, admitting only subextensions of even degree. And moreover, depending on the choice of an ordering on $K$, there may be either zero or one lifting to $L$.

\begin{ex}\label{ex-root}
Consider 
$$K=\RR(X)\to \RR(X^{1/2})\to \RR(X^{1/4})\to\ldots\to \RR(X^{1/2^n})\to\ldots \RR(\cup_nX^{1/2^n})=L.$$ There is no liftings to $L$ of the ordering $O_-$ (i.e. the ordering for which the non-negative polynomials are those polynomials non-negative on a neighbourhood of $O_-$) and exactly one lifting to $L$ of the ordering $O_+$ (the non-negative polynomials are those polynomials which are non-negative on a neighbourhood of $O_+$). 
\end{ex}

Finally, using Proposition \ref{FiniteAtMost}, we obtain a characterization of monoreality inside real radicial algebraic extensions. 

\begin{cor}
A real radicial algebraic extension is monoreal if and only if any finite subextension has odd degree.
\end{cor}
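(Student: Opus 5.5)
The plan is to prove the two implications separately, using Proposition \ref{FiniteAtMost} on finite subextensions. Throughout, let $K\to L$ be an algebraic real radicial extension.

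For the direct implication, assume $K\to L$ is monoreal. Take a finite subextension $K\to M$ with $M\subset L$. By the primitive element theorem ($K$ has characteristic zero), $M=K(a)$ for some $a\in L$. By definition $a$ is monoreal over $K$, and a monoreal polynomial has odd degree, as noted right after the definition. Hence $[M:K]=\deg m_a$ is odd. Alternatively one can use Proposition \ref{MonorealRestriction}: $K\to M$ is monoreal, so every element of $M$ has odd degree over $K$. This direction does not use real radiciality.

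For the converse, assume every finite subextension of $K\to L$ has odd degree, and let $a\in L$. We must show $a$ is monoreal over $K$, i.e.\ that $K\to K(a)$ is monoreal (Proposition \ref{MonorealCriterion}). The key step is to check that $K\to K(a)$ is itself real radicial. Suppose $\phi_1,\phi_2:K(a)\to R$ agree on $K$. Since $[K(a):K]$ is odd, both are compatible orderings of the formally real field $K(a)$, and each extends to a morphism $L\to R'$ into some real closed field. These extensions exist because $L$ is formally real: every finite subextension $K(a)\to M$ has odd degree over $K(a)$, so $\phi_i$ lifts along odd-degree extensions, and Zorn's lemma handles the limit, exactly as in the proof of Proposition \ref{LiftingAlgebraic}. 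After amalgamation (Proposition \ref{Amalgamation}) we may take a common target. The two extensions then agree on $K$, so by real radiciality of $K\to L$ they coincide, and restricting gives $\phi_1=\phi_2$. Thus $K\to K(a)$ is a finite real radicial extension of odd degree, and Proposition \ref{FiniteAtMost} shows it is monoreal. So $a$ is monoreal over $K$, and since $a$ was arbitrary, $K\to L$ is monoreal.

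The main obstacle is the lifting step, namely extending a morphism $K(a)\to R$ all the way to $L$. I would run the Zorn argument on pairs (intermediate field $F$, morphism $F\to S$ with $S$ real closed extending $R$). Given a maximal pair with $F\neq L$, pick $b\in L\setminus F$. The extension $F\to F(b)$ is a subextension of the finite extension $K(a,b)\to F(b)$ taken over the appropriate finite level; more concretely, its degree divides into the odd degree $[K(c)(b):K]$ for a suitable finitely generated piece. Hence the minimal polynomial of $b$ over $F$ has odd degree and has a root in the real closed field $S$, which contradicts maximality. The remaining work is to make this divisibility argument precise: the degree $[F(b):F]$ equals the degree of $b$ over some finite subextension $K(c)\subset F$ containing the coefficients, and that degree is odd because $[K(c,b):K]$ is odd.
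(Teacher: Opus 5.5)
Your proof is correct and takes the route the paper indicates: the forward direction uses that monoreal polynomials have odd degree, and the converse applies Proposition \ref{FiniteAtMost} to the finite subextensions $K\to K(a)$. Your lifting step extends morphisms $K(a)\to R$ to $L$ using the odd-degree hypothesis and Zorn's lemma, which works with the target $R$ kept fixed, so that real radiciality descends from $K\to L$ to $K\to K(a)$; this is exactly the detail the paper leaves implicit, and it is genuinely needed because real radiciality is not inherited by subextensions in general.
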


\section{Real  radiciality for rings extension}\label{sec-ring}

We are concerned now with developing a real version of radiciality for ring extensions. The classical notion of radiciality guaranties that the induced morphism at the spectrum level is injective (see \cite{BFMQ}). The real counterpart will concern the real spectrum. 

Recall that the real spectrum of a ring $A$ can be seen as the set of couples $(\p,\leq)$ where $\p$ is a prime ideal of $A$ and $\leq$ is an ordering the residual field 
$k(\p)$. 
For $\alpha=(\p,\leq)\in\Sp_r A$, the induced prime ideal $\p$ is real, namely the quotient ring $A/\p$ admits an ordering. The set of all such prime ideals will be denoted as $\RSp A$; it is a subset of $\Sp A$. Hence, the so-called support map, which sends $\alpha$ to $\p$, goes from $\Sp_r A$ to $\Sp A$, with image $\RSp A$.

As a typical example, one may think as the real spectrum of $\RR[x]$ as the set of all points $a\in\RR$ which correspond to the evaluation of $x$ at the point $a$, together with the set of all half branches $\gamma(t)$ on $\RR$ which correspond to the evaluation of $x$ at the real puiseux series $\gamma(t)$ (these latter correspond to the orderings of the fraction field $\RR(x)$).

\subsection{Real radiciality}
We extend our notion of real radicial extension to rings:
\begin{defn}
Let $A\xrightarrow{i}  B$ be a rings extension. The extension is real radicial if for any real closed field $R$, if $\psi_1:B\to R$ and $\psi_2:B\to R$ are two distinct morphisms, then they remain distinct by composition with $i$.
\end{defn}

Real radiciality can be characterized in terms of tensor product as follows.

\begin{prop}\label{rrsaturdeux}
Let $A\xrightarrow{i}  B$ be an extension of rings. Then, the following properties are equivalent:
\begin{enumerate}
\item $i$ is  real radicial.
\item The map $\Sp_r (\Delta):\Sp_r B\to \Sp_r (\BAB)$ is surjective where $\Delta:\BAB\to B$ maps $b_1\otimes b_2$ to $b_1b_2$ is the canonical diagonal morphism.
\end{enumerate}
\end{prop}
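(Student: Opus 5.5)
Points of $\Sp_r(\BAB)$ are equivalence classes of ring morphisms $\BAB\to R$ with $R$ real closed. By the universal property of the tensor product, these are the same as pairs of morphisms $\psi_1,\psi_2:B\to R$ with $\psi_1\circ i=\psi_2\circ i$, namely $b_1\otimes b_2\mapsto \psi_1(b_1)\psi_2(b_2)$. A point lies in the image of $\Sp_r(\Delta)$ exactly when some representative factors through $\Delta$. For $\psi:B\to S$ this gives the pair $(\psi,\psi)$, i.e. $\psi\circ\Delta$ sends $b_1\otimes b_2\mapsto\psi(b_1)\psi(b_2)$. The whole proof is a translation through this dictionary, with amalgamation (Proposition \ref{Amalgamation}) handling the fact that real spectrum points are classes of morphisms rather than morphisms.

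\emph{(1)$\Rightarrow$(2).} Take a point of $\Sp_r(\BAB)$ represented by $\Phi:\BAB\to R$, and set $\psi_1(b)=\Phi(b\otimes 1)$ and $\psi_2(b)=\Phi(1\otimes b)$. These are morphisms $B\to R$ with $\psi_1\circ i=\psi_2\circ i$, since $i(a)\otimes 1=1\otimes i(a)$. Real radiciality gives $\psi_1=\psi_2=:\psi$. Then $\Phi(b_1\otimes b_2)=\psi(b_1)\psi(b_2)=\psi(b_1b_2)=(\psi\circ\Delta)(b_1\otimes b_2)$, so $\Phi=\psi\circ\Delta$ and the point is the image of the point of $\Sp_r B$ given by $\psi$.

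\emph{(2)$\Rightarrow$(1).} Let $\psi_1,\psi_2:B\to R$ satisfy $\psi_1\circ i=\psi_2\circ i$, and let $\Phi=\psi_1\otimes\psi_2:\BAB\to R$. By surjectivity there is $\psi:B\to S$ with $\psi\circ\Delta$ and $\Phi$ defining the same point of $\Sp_r(\BAB)$. This step is the main obstacle: we only get equality as points, not as morphisms. To handle it, argue as in the discussion after Proposition \ref{Amalgamation}. Both morphisms have the same kernel $\p$ and induce the same ordering on $k(\p)$. Amalgamating over the real closure of $(k(\p),\le)$ gives a real closed $T$ and embeddings $\beta:R\to T$, $\gamma:S\to T$ with $\beta\circ\Phi=\gamma\circ\psi\circ\Delta$ as morphisms. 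Evaluating on $b\otimes 1$ and on $1\otimes b$ gives
\[
\beta\psi_1(b)=\gamma\psi(b)=\beta\psi_2(b).
\]
Since $\beta$ is injective, being a field morphism, $\psi_1=\psi_2$. This is real radiciality.

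The only care needed is the justification of the amalgamation step. Both morphisms factor through $k(\p)$ with the same ordering, hence through its real closure, which embeds uniquely over the ordering into $R$ and into $S$. Proposition \ref{Amalgamation} then produces $T$.
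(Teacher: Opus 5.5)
Your proof is correct and follows the paper's argument in both directions. For (1)$\Rightarrow$(2) you split a morphism $\BAB\to R$ into its two factors, and for (2)$\Rightarrow$(1) you evaluate on $b\otimes 1$ and $1\otimes b$. The only difference is that you spell out, via amalgamation over the real closure of $(k(\p),\le)$ and injectivity of $\beta$, the step the paper compresses into ``up to replacing $R$ by a real closed extension''.
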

\begin{proof}
\textbf{(2) $\implies$ (1):}
Assume (2). Let $\psi_1:B\to R$ and $\psi_2:B\to R$ be two morphisms 
such that $\psi_1\circ  i=\psi_2\circ i$. We get a morphism $\psi:\BAB\to R$ such that $\psi(b_1\otimes_A b_2)=\psi_1(b_1)\psi_2(b_2)$. By (2), the point in $\Sp_r(\BAB)$ given by $\psi$ is the image of an element of $\Sp_r B$ by $\Sp_r(\Delta)$. So, up to replacing $R$ by a real closed extension, we get 
a morphism $\psi^{\prime}:B\to R$ such that the following diagram
$$\begin{tikzcd}
\BAB \arrow[d, "\Delta" '] \arrow[r,"\psi"] & R\\
B \arrow[ur,,"\psi'" ',] &
\end{tikzcd}$$
is commutative. Let $b\in B$. We have 
$$\psi_1(b)=\psi(b\otimes_A 1)=\psi'\circ \Delta(b\otimes_A 1)=\psi'(b)=\psi'\circ \Delta(1 \otimes_A b)=\psi(1\otimes_A b)=\psi_2(b),$$ so that $\psi_1=\psi_2$.

\textbf{(1) $\implies$ (2):}
Let $\psi:\BAB\to R$ be a morphism. Composing $\psi$ with $b\mapsto b\otimes_{A}1$ and $b\mapsto 1\otimes_{A}b$, we obtain two morphisms $\psi_i:B\to R$, $i=1,2$. Clearly, $\psi_1\circ  i=\psi_2\circ  i$: for $a\in A$ we have $\psi_1\circ i(a)=\psi( i(a)\otimes_A 1)=\psi(1\otimes_A  i(a))=\psi_2\circ  i(a)$. Remark that for $b_1,b_2$ in $B$ we have $\psi(b_1\otimes_A b_2)=\psi_1(b_1)\psi_2(b_2)$. By (1), it follows that $\psi_1=\psi_2$. We get a commutative diagram
$$\begin{tikzcd}
\BAB \arrow[d, "\Delta" '] \arrow[r,"\psi"] & R\\
B \arrow[ur,,"\psi'" ',] &
\end{tikzcd}$$
by setting $\psi'(b)=\psi_1(b)=\psi_2(b)$. We clearly get $(\Sp_r \Delta)(\psi')=\psi$, namely the point in 
$\Sp_r (\BAB)$ given by $\psi$ is the image of the point in 
$\Sp_r B$ given by $\psi'$.
\end{proof}


\subsection{Relation with the real spectrum}
In order to fit into the framework from the previous sections, we will consider extensions of rings whose induced residual field extensions are algebraic. More precisely, since we focus only on real primes, we consider the following:

\begin{defn}
A morphism of rings $f:A\to B$ is called \textit{real residually algebraic} if for any $\p\in \RSp B$ then $k(f^{-1}(\p))\to k(\p)$ is algebraic.
\end{defn}

As an example, integral morphisms are, of course, real residually algebraic. Another example is the class of quasi-finite morphisms which has been introduced by Grothendieck. Recall that a finite type extension $A\to B$ is  quasi-finite if any fibre consists of isolated points or equivalently if, for any $\q$ prime in $B$ lying over some prime $\p$ in $A$, one has that $B_{\q}/{\p}B_{\q}$ is finite over $k(\p)$ (which implies that the residual extension field $k(\p)\to k(\q)$ is finite).

Real radiciality is a good notion to test injectivity at the real spectrum level.

\begin{thm}\label{prop3}
Let $i:A\to B$ be a real residually algebraic extension of rings. Then  $\Sp_r i$ is injective if and only if $i$ is  real radicial.
\end{thm}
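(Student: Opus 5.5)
The plan is to mimic the proof of Proposition \ref{LiftSpecrBij}(2), working pointwise on supports so that everything reduces to algebraic residue field extensions.

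\textbf{Real radicial $\Rightarrow$ injective.} Let $\beta_1,\beta_2\in\Sp_r B$ have the same image in $\Sp_r A$. Represent them by morphisms $\psi_1:B\to R_1$ and $\psi_2:B\to R_2$. The restrictions $\psi_1\circ i$ and $\psi_2\circ i$ define the same point of $\Sp_r A$: the same prime $\p$ and the same ordering on $k(\p)$. By the amalgamation property (Proposition \ref{Amalgamation}), applied over the real closure of $(k(\p),\le)$, we may embed $R_1$ and $R_2$ into a common real closed field $S$ so that $\psi_1\circ i=\psi_2\circ i$ as maps to $S$. Real radiciality then gives $\psi_1=\psi_2$, so $\beta_1=\beta_2$. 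This direction does not use the residually algebraic hypothesis.

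\textbf{Injective $\Rightarrow$ real radicial.} Let $\psi_1,\psi_2:B\to R$ satisfy $\psi_1\circ i=\psi_2\circ i$. By injectivity of $\Sp_r i$, they define the same point $(\q,\le)$ of $\Sp_r B$. In particular $\ker\psi_1=\ker\psi_2=\q$, which is a real prime, and both maps factor through $k(\q)$, inducing the same ordering there. Set $\p=i^{-1}(\q)$.
\begin{itemize}
\item The induced maps $\bar\psi_1,\bar\psi_2:k(\q)\to R$ agree on $k(\p)$.
\item By real residual algebraicity, $k(\p)\to k(\q)$ is algebraic.
\item So it suffices to show $\bar\psi_1(b)=\bar\psi_2(b)$ for each $b\in k(\q)$, working in the finite extension $k(\p)(b)$.
\end{itemize}

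\textbf{Main obstacle.} The key step is this last equality, and it is exactly the point where Proposition \ref{LiftSpecrBij}(2) invokes \cite[Prop 1.3.7]{BCR}. Both $\bar\psi_1$ and $\bar\psi_2$ induce the same ordering on $k(\q)$, hence on $k(\p)(b)$, and they agree on $k(\p)$. Each therefore factors through the real closure of $k(\p)(b)$ for that ordering, which is algebraic over $k(\p)$. By the uniqueness of order-preserving $k(\p)$-embeddings of this real closure into $R$, we get $\bar\psi_1(b)=\bar\psi_2(b)$.

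Concretely, the images of $b$ are roots of its minimal polynomial $m_b$ over $k(\p)$. They occupy the same position among the real roots of $m_b$, since the ordering, through Thom's lemma or sign conditions on the derivatives of $m_b$, determines which root is chosen. So the two images coincide, and hence $\psi_1=\psi_2$.
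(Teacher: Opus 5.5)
Your proof is correct and follows the paper's argument. The direction real radicial $\Rightarrow$ injective uses amalgamation over the real closure of $k(\p)$, as in the paper. The converse reduces to the algebraic residue extension $k(\p)\to k(\q)$, again as in the paper, with one cosmetic difference: the paper first observes that $\Sp_r$ of $k(\p)\to k(\q)$ is injective and then cites Proposition \ref{LiftSpecrBij}(2), whereas you inline that field-case argument (the two maps induce the same ordering on $k(\q)$, and order-preserving $k(\p)$-embeddings of the real closure into $R$ are unique), which is exactly what that proposition's proof relies on.
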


\begin{proof} Assume $\Sp_r i$ is injective

Let $\psi_1:B\to R$ and $\psi_2:B\to R$ be morphisms in a real closed field with $\psi_1 \circ i=\psi_2 \circ i$. Thus the morphisms $\psi_1,\psi_2$ induce orderings $\beta_1,\beta_2$ on $B$ lying over the same ordering $\alpha$ on $A$. By assumption, these two orderings are the same, we denote it by $\beta$. Denote by $\q$ the support of $\beta$ and $\p$ the support of $\alpha$. Then $\psi_1$ and $\psi_2$ induces morphisms $\psi_1',\psi_2'$ at the level of residual fields as in the following commutative diagram
$$\begin{tikzcd}
A \arrow[r,"i"] \arrow[d] & B \arrow[d] \arrow[r, "\psi_2"', "\psi_1"] \arrow[r,shift right]& R & \\
k(\p) \arrow[r,"i'"]  & k(\q) \arrow[ur,swap,"\psi_2'", "\psi'_1"'near start] \arrow[ur, shift right] &&
\end{tikzcd}$$
Note that $\Sp_r i'$ is still injective, since we may identify the orderings on a ring with a specified support, with orderings on the corresponding residue field. Moreover $i'$ is an algebraic extension by assumption on $i$, so we conclude that $\psi'_1=\psi'_2$ using the fields case from Proposition \ref{LiftSpecrBij}, and hence $\psi_1=\psi_2$.\\

Assume $i$ is  real radicial.
Let $\beta_1,\beta_2$ be orderings on $B$ lying over the same ordering $\alpha$ on $A$. So there exist morphisms $\psi_1:B\to R_1$ and $\psi_2:B\to R_2$ into real closed fields such that $\psi_1\circ i$ and $\psi_2\circ i$ induce $\alpha$. We can amalgamate $\psi_1\circ i$ and $\psi_2\circ i$ by Proposition \ref{Amalgamation}, giving rise to commutative diagrams:
$$\begin{tikzcd}
& & R_1 &&& & R_1 \arrow[dr,"j_1"]& \\
A \arrow[r,"i"] \arrow[urr, "\psi_1\circ i", bend left=20]\arrow[drr, "\psi_2\circ i", bend right=20] &B  \arrow[dr, "\psi_2"] \arrow[ur, "\psi_1"] &   &&& A \arrow[ur,"\psi_1\circ i"] \arrow[dr,"\psi_2\circ i"] & & R \\
&& R_2 &&& & R_2 \arrow[ur,"j_2"]&
\end{tikzcd}$$
Now $j_1\circ \psi_1:B\to R$ and $j_2\circ \psi_2:B\to R$ are equal after composition with $i$, so by assumption they are equal. It implies the equality of $\beta_1$ and $\beta_2$.
\end{proof}

Let us note that, when the extension $i$ is real radicial, then $\Sp_r i$ is not necessarily an homeomorphism onto its image. Indeed, take the example of the normalization $C'$ of the cubic with one nodal point $C$ where we erase in $C'$ the point $A$ which is one of the two points lying over the nodal point of $C$. Then, an half branch in $C'$ with origin $A$ has as image an half branch in $C$ whose origin is the nodal point and hence is not closed.

\subsection{Relation with real prime ideals}

A natural question is how injectivity at the real spectrum level is related to injectivity at the level of real ideals of the spectrum.

\begin{prop}
\label{prop3bis}
Let $i:A\to B$ be a real residually algebraic extension of rings. Under the two conditions
\begin{enumerate}
\item $\Sp i$ is injective by restriction to $\ReSp B$,
\item for any $\p\in \ReSp B$, $k(i^{-1}(\p))\to k(\p)$ is  real radicial,
\end{enumerate}
the map $\Sp_r i$ is injective.\\
Conversely, the injectivity of $\Sp_r i$ implies (2).
 \end{prop}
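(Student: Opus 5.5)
The plan is to reduce everything to the fibres of the support map and then apply the field case, Proposition \ref{LiftSpecrBij}(2), to the residue field extensions. Recall that a point of $\Sp_r B$ is a pair $(\q,\leq)$ with $\q\in\ReSp B$ and $\leq$ an ordering of $k(\q)$. The map $\Sp_r i$ sends $(\q,\leq)$ to $(i^{-1}(\q),\leq|_{k(i^{-1}(\q))})$, where the restriction is taken along the induced residue field embedding $k(i^{-1}(\q))\to k(\q)$. So $\Sp_r i$ is compatible with the support maps $\Sp_r B\to\ReSp B$ and $\Sp_r A\to\ReSp A$, and on the fibre over $\q$ it coincides with the map $\Sp_r k(\q)\to \Sp_r k(i^{-1}(\q))$. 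Since $i$ is real residually algebraic, each extension $k(i^{-1}(\q))\to k(\q)$ is algebraic. Hence Proposition \ref{LiftSpecrBij}(2) applies: this extension is real radicial if and only if the induced map on real spectra is injective.

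For the direct implication, take $\beta_1=(\q_1,\leq_1)$ and $\beta_2=(\q_2,\leq_2)$ in $\Sp_r B$ with the same image $(\p,\leq)$ in $\Sp_r A$.
\begin{enumerate}
\item Their supports satisfy $i^{-1}(\q_1)=\p=i^{-1}(\q_2)$, and $\q_1,\q_2\in\ReSp B$. Condition (1) then gives $\q_1=\q_2=:\q$.
\item Now $\leq_1$ and $\leq_2$ are two orderings of $k(\q)$ restricting to the same ordering of $k(\p)$. By condition (2) the extension $k(\p)\to k(\q)$ is real radicial, and it is algebraic, so Proposition \ref{LiftSpecrBij}(2) yields $\leq_1=\leq_2$.
\end{enumerate}
Therefore $\beta_1=\beta_2$, and $\Sp_r i$ is injective.

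For the converse, assume $\Sp_r i$ is injective and fix $\q\in\ReSp B$ with $\p=i^{-1}(\q)$. Orderings of $k(\q)$ are exactly the points of $\Sp_r B$ with support $\q$, and their images in $\Sp_r A$ have support $\p$ and restrict as described above. So the map $\Sp_r k(\q)\to\Sp_r k(\p)$ is the restriction of the injective map $\Sp_r i$, hence injective. Since $k(\p)\to k(\q)$ is algebraic, Proposition \ref{LiftSpecrBij}(2) gives that it is real radicial.

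Alternatively, the converse can be obtained from Theorem \ref{prop3}: $i$ is real radicial, and two morphisms $k(\q)\to R$ that agree on $k(\p)$ compose to two morphisms $B\to R$ that agree on $A$. These compositions are therefore equal, and since $B\to k(\q)$ has image generating $k(\q)$ as a fraction field, the two morphisms are equal. This shows $k(\p)\to k(\q)$ is real radicial, and I would mention it as a cross-check.

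The only step needing care is the identification of the fibre of $\Sp_r B\to\ReSp B$ over $\q$ with $\Sp_r k(\q)$, compatibly with $\Sp_r i$. This is standard from the definition of the real spectrum, so no serious obstacle is expected.
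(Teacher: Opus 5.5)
Your proof is correct and follows the paper's argument. Condition (1) forces the two supports to coincide, and condition (2) on the algebraic residue extension $k(\p)\to k(\q)$ then identifies the orderings; the converse restricts the injective map $\Sp_r i$ to the fibre over $\q$ and applies Proposition \ref{LiftSpecrBij}. The only difference is cosmetic: you phrase the direct implication with (support, ordering) pairs, whereas the paper uses morphisms into a common real closed field obtained by amalgamation.
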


\begin{proof}
Assume we have conditions (1) and (2) and consider $\beta_1,\beta_2$ two orderings on $B$ lying over the same ordering $\alpha$ on $A$. As in the proof of Theorem \ref{prop3}, we may assume that $\beta_1$ and $\beta_2$ are given by morphisms in a common real closed field, $\psi_1:B\to R$ and $\psi_2:B\to R$, with $\psi_1 \circ i=\psi_2 \circ i$. By (1) the kernels of $\psi_1$ and $\psi_2$ are equal, so we are reduced as in the proof of Theorem \ref{prop3} to deal with the fields case, which is true by (2).

Conversely, assume $\Sp_r i$ injective. Let $\p\in\ReSp B$. The map $\Sp_{r} i'$,  where $i': k(i^{-1}(\p))\to k(\p)$, is also injective since the data of an ordering on $B$ with support $\p$ is equivalent to the data of an ordering on $k(\p)$. The result follows from the field case by Proposition \ref{LiftSpecrBij}.
\end{proof}

\begin{rem} \label{remprop3} In general, the injectivity of $\Sp_r i$ does not imply (1) in Proposition \ref{prop3bis}. 
Consider for $A$ the ring of polynomial functions on the unit circle localized at the point $(1,0)$, namely $A=(\RR[x,y]/(y^2=1-x^2))_{(1,0)}$. Let now $B$ be the associated ring of germs of semi-algebraic functions 
on the unit circle localized at the point $(1,0)$. Then, $A$ and $B$ have the same real spectrum (\cite[8.5.2]{BCR}), 
but the real ideal $(0)$ in $A$ splits into two real ideals in $B$, namely the class of $(y-\sqrt{1-x^2})$ and $(y+\sqrt{1-x^2})$ in $B$.
\end{rem}

However in the algebraic geometric setting, we will have a better connection. By an affine algebraic variety $X$ over a real closed field $R$, we mean the scheme associated with a finitely generated $R$-algebra. In the following, we always assume the set of real closed points $X(R)$ contains a smooth point. In this situation, the coordinate ring of $X$, denoted by $\Pol(X)$, coincide with the ring of polynomial functions on $X(R)$.

\begin{prop}\label{prop-inj}
Let $\pi : Y\to X$ be a dominant and finite type morphism between affine algebraic varieties over a real closed field, corresponding to a ring extension $\Pol(X)\to \Pol(Y)$. Assume that $\pi$ is injective at the level of real closed points. Then the induced map $\RSp \Pol(Y) \to \RSp \Pol(X)$ is injective.
\end{prop}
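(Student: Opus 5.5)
Suppose $\q_1,\q_2\in\RSp \Pol(Y)$ have the same image $\p$ in $\RSp\Pol(X)$. The plan is to show that $\q_1=\q_2$ by comparing the real closed points of the corresponding subvarieties. The basic tool is the real Nullstellensatz. For a real prime $\q$, the ideal of polynomials vanishing on the real zero set $Z(\q)(R)\subset Y(R)$ is exactly $\q$ itself, because real prime ideals are real radical. Hence a real prime is determined by the real points of its zero set. Moreover, $Z(\q)(R)$ is Zariski dense in $Z(\q)$, and its dimension as a semialgebraic set equals $\dim \Pol(Y)/\q$.

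The steps are as follows.

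\textbf{Step 1.} Set $W_j=Z(\q_j)$. The inclusion $\Pol(X)/\p\hookrightarrow \Pol(Y)/\q_j$ is injective, so $\pi(W_j)$ is Zariski dense in $V=Z(\p)$. The same holds for the real points: $\pi(W_j(R))$ is Zariski dense in $V(R)$. To see this, note that a polynomial vanishing on $\pi(W_j(R))$ pulls back to an element of $\q_j$, since $\q_j$ is real. Therefore it lies in $\q_j\cap\Pol(X)=\p$.

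\textbf{Step 2.} I will compare dimensions via residue fields. Because $\pi$ is of finite type, $k(\p)\to k(\q_j)$ is finitely generated, so $\dim W_j\ge \dim V$. Restricted to $W_j(R)$, the map $\pi$ is an injective semialgebraic map. Semialgebraic dimension does not drop under an injective semialgebraic map, and $\dim\pi(W_j(R))\le \dim V(R)$. Since the real points are dense, $\dim W_j(R)=\dim W_j$ and $\dim V(R)=\dim V$. Combining these gives $\dim W_j=\dim V$, so $k(\p)\to k(\q_j)$ is algebraic, hence finite.

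\textbf{Step 3.} This step is the heart of the argument. Suppose $\q_1\neq\q_2$. Then neither contains the other: both have the same dimension and are prime, so an inclusion would force equality. Consider $W_{12}=W_1\cap W_2$, which has dimension strictly less than $\dim V$. Since $\pi_{|W_j}:W_j\to V$ is generically finite, I can choose a dense open $U\subset V$ with the following properties.
\begin{enumerate}
\item Above $U$ the maps $W_j\to V$ are finite.
\item $U$ avoids $\overline{\pi(W_{12})}$, which is a proper closed subset.
\end{enumerate}
I then want real points $y_1\in W_1(R)$ and $y_2\in W_2(R)$ with $\pi(y_1)=\pi(y_2)\in U(R)$. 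Since $y_1$ and $y_2$ do not both lie in $W_{12}$, we get $y_1\ne y_2$, contradicting injectivity on real points.

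\textbf{Main obstacle.} Producing such a pair of real points is the delicate part. By Step 1, each $\pi(W_j(R))$ contains a nonempty open semialgebraic subset of $V(R)$ of full dimension $d$. What is needed is that these two sets intersect, and Zariski density alone does not force two semialgebraic sets of full dimension to meet. For example, $V(R)$ could have several components of top dimension.

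My first approach would be to work in the real spectrum. Take an ordering $\alpha$ of $k(\p)$ that extends to $k(\q_1)$, and look at the open set it generates. Since $\q_1$ and $\q_2$ lie over the same $\p$, the amalgamation argument from the proof of Theorem \ref{prop3} applies to $\q_1$ and $\q_2$ through the tensor product $k(\q_1)\otimes_{k(\p)}k(\q_2)$. I would try to show that this ring has a real prime, by using the fact that $\Pol(Y)/(\q_1+\q_2)$ relates to both. If it does, any morphism into a real closed field gives a point of $\Sp_r$ over a generic point of $V$. Specialization via the Artin--Lang theorem then yields real points $y_1\neq y_2$ with the same image. If that fails, my fallback is to exploit the hypothesis that $X(R)$ contains a smooth point. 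Near a smooth point, density arguments are more controllable, and I would try to force $\pi(W_1(R))\cap\pi(W_2(R))$ to have nonempty interior.
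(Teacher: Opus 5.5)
There is a genuine gap, and it is exactly the point you flag as the ``main obstacle''. Steps 1 and 2 and the reduction in Step 3 are fine. But nothing in the proposal produces $y_1\in W_1(R)$ and $y_2\in W_2(R)$ with $\pi(y_1)=\pi(y_2)\in U(R)$, and that is the whole content of the proposition. As you observe, Zariski density of $\pi(W_1(R))$ and $\pi(W_2(R))$ in $V$ does not make them meet: a priori they could lie on different connected components of $V(R)$, each Zariski dense.

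Your suggested remedies do not address this:
\begin{enumerate}
\item The ring $\Pol(Y)/(\q_1+\q_2)$ is the coordinate ring of $W_{12}$. It lies over a proper closed subset of $V$ and says nothing about the generic point of $V$.
\item The smooth point of $X(R)$ gives no information about the subvariety $V$.
\item ``$k(\q_1)\otimes_{k(\p)}k(\q_2)$ has a real prime'' is equivalent to the existence of one ordering of $k(\p)$ extending to both $k(\q_1)$ and $k(\q_2)$. That is the same obstacle restated at the generic point.
\end{enumerate}

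The missing idea is that injectivity on real points forces generic surjectivity on real points: each $\pi(W_j(R))$ contains $U_j(R)$ for some nonempty Zariski open $U_j\subseteq V$.

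The paper gets this from Parusi\'nski's result that images of real algebraic sets under injective regular maps are Zariski constructible (adapted to a general real closed field via Coste). Then $\pi(W_j(R))$ is a Zariski-constructible subset of dimension $d$ of the irreducible $V$, so $\pi(W_1(R))\cap\pi(W_2(R))$ still has dimension $d$. Injectivity transfers this to $W_1\cap W_2$, which forces $W_1=W_2$.

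You could also close the gap by a parity argument using only what you already have. Take a dense open $U_j\subseteq V$ over which $W_j\to V$ is finite \'etale of degree $n_j=[k(\q_j):k(\p)]$. For $x\in U_j(R)$, the fibre consists of $n_j$ points of $W_j(R[\sqrt{-1}])$ permuted by conjugation. So the number of real points in it is congruent to $n_j$ modulo $2$, and by injectivity it is at most $1$. If $n_j$ were even, $\pi(W_j(R))$ would miss $U_j(R)$, contradicting Step 1. Hence $n_j$ is odd and $U_j(R)\subseteq\pi(W_j(R))$. Now $V(R)$ is Zariski dense in $V$, so $U\cap U_1\cap U_2$ has a real point, and it gives your pair $y_1\neq y_2$.

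This also completes your real-spectrum route. By Tarski--Seidenberg, injectivity holds on $R'$-points for any real closed $R'\supseteq R$, which makes $k(\p)\to k(\q_j)$ real radicial. Proposition \ref{FiniteAtMost} then says every ordering of $k(\p)$ lifts to each $k(\q_j)$. So the tensor product does have a real prime, yielding two distinct generic $R'$-points with the same image.
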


\begin{proof}
Let $q_1$ and $q_2$ be real prime ideals of $\Pol(Y)$ lying over a real prime ideal $p\in \Pol(X)$, corresponding respectively to irreducible varieties $W_1$ and $W_2$ of $Y$ and $V$ of $X$. Note that $\dim W_i \geq \dim V$ since $\pi(W_i)$ is Zariski dense in $V$ and $\pi$ is dominant. Moreover $\dim W_i\leq \dim V$ by injectivity of $\pi$ at the level of real closed points, so that  $W_1, W_2$ and $V$ have the same dimension $d$.

By injectivity of $\pi$ at the level of closed points, $\pi(W_1)$ is a Zariski-constructible subset of $V$ of dimension $d$ by an argument similar to \cite{P} (while \cite{P} deals with the case $R=\R$, one may replace Theorem 2.2 there with Corollary 3.19 in \cite{Cos} which, even if it is written in the real case, is valid with the same proof over a real closed field). 
The same is true for $\pi(W_2)$, so the irreducibility of $V$ imposes that the intersection $\pi(W_1) \cap \pi(W_2)$ is still a Zariski-constructible subset of $V$ of dimension $d$. The injectivity of $\pi$ at the level of closed points implies that $W_1$ and $W_2$ must intersects in dimension $d$ too, so that they must be equal by irreducibility. As a consequence and since they are real ideals then $q_1=q_2$ as expected.
\end{proof}

This result implies a good understanding of the connection between injectivity at the real spectrum level and injectivity at the level of real ideals of the spectrum in the geometric context.

\begin{thm}\label{last-thm} Let $A$ and $B$ be finitely generated algebras over a real closed field, and $i:A\to B$ be a real residually algebraic extension of rings. Then,  $\Sp_r i$ is injective if and only if :
 $\Sp i$ is injective by restriction to real prime ideals and
$k(i^{-1}(\p))\to k(\p)$ is  real radicial for any $\p\in \ReSp B$.
\end{thm}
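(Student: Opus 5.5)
The sufficiency direction is already Proposition \ref{prop3bis}: if $\Sp i$ is injective on $\ReSp B$ and every real residual extension $k(i^{-1}(\p))\to k(\p)$ is real radicial, then $\Sp_r i$ is injective. Proposition \ref{prop3bis} also gives, conversely, that injectivity of $\Sp_r i$ forces every residual extension to be real radicial. So I only need to show that injectivity of $\Sp_r i$ implies injectivity of $\Sp i$ restricted to $\ReSp B$. Remark \ref{remprop3} shows this fails for general rings, so here the finite generation hypothesis has to be used. I plan to do this through Proposition \ref{prop-inj}, applied to suitable subvarieties.

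Let $\q_1,\q_2\in\ReSp B$ lie over the same $\p\in\ReSp A$, where $A=\Pol(X)$ and $B=\Pol(Y)$. The plan has four steps.

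\begin{enumerate}
\item \emph{Reduction to a dominant morphism.} Replace $A$ by $A/\p$ and $B$ by $B/(\q_1\cap\q_2)$, or work with the closed subscheme $Z=V(\q_1)\cup V(\q_2)$ of $Y$. The map $Z\to V(\p)$ is of finite type, and each component dominates $V(\p)$. Injectivity of $\Sp_r$ passes to quotients, since the real spectrum of a quotient embeds compatibly.
\item \emph{Real closed points.} Real closed points of $Z$ are morphisms to $R$, that is, points of $\Sp_r$ with residue field $R$. Injectivity of $\Sp_r i$ then gives injectivity of $Z(R)\to X(R)$. If two real points have the same image, they define points of $\Sp_r B$ over the same point of $\Sp_r A$, hence they are equal.
\item \emph{The smooth-point hypothesis.} Proposition \ref{prop-inj} assumes that the varieties have smooth real points. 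Since $\q_j$ and $\p$ are real primes, each $V(\q_j)(R)$ and $V(\p)(R)$ is Zariski dense in its variety, and so contains smooth points of that irreducible component. To avoid difficulties with the reducible $Z$, I may run the dimension argument of Proposition \ref{prop-inj} directly rather than cite it verbatim.
\item \emph{The dimension argument.} The images $\pi(W_j)$ are Zariski-constructible of dimension $d$ in the irreducible $V=V(\p)$, so they intersect in dimension $d$. Injectivity on real points then forces $W_1(R)\cap W_2(R)$ to have dimension $d$. Since real points are Zariski dense in each $W_j$, this gives $W_1=W_2$, hence $\q_1=q_2$.
\end{enumerate}

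The main obstacle is the density needed in step 4. One must ensure that the images have dimension $d$ as semialgebraic sets, not merely as schemes. This is exactly where realness of the primes is used: $\dim W_j(R)=\dim W_j$, and the real residually algebraic hypothesis keeps the dimensions equal. The constructibility statement of \cite{P} and \cite{Cos} supplies the remaining input, as in the proof of Proposition \ref{prop-inj}.
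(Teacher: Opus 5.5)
Your proposal is correct and follows the paper's proof. The paper likewise uses Proposition \ref{prop3bis} for everything except injectivity on $\ReSp B$. It obtains that remaining part by noting that injectivity of $\Sp_r i$ gives injectivity on real closed points, since a real closed field has a unique ordering, and then invoking Proposition \ref{prop-inj}.

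Your reduction to $A/\p\to B/(\q_1\cap\q_2)$ makes explicit the hypotheses that the paper's one-line application of Proposition \ref{prop-inj} leaves implicit, namely dominance and the existence of smooth real points. Two further points are also justified in your version: realness of the primes gives Zariski density of real points and smooth real points, and real residual algebraicity gives $\dim W_j=\dim V$.
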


\begin{proof}
According to Proposition \ref{prop3bis}, it remains to prove that the injectivity of $\Sp_r i$ implies the injectivity of $\Sp i$ by restriction to $\ReSp B$. But the injectivity of $\Sp_r i$ implies the injectivity of $\Sp i$ at the level of real closed points since a real closed field admits a unique ordering, so it suffices to apply Proposition \ref{prop-inj}.
\end{proof}

Here is an application in the framework of integral extensions together with an assumption of centrality.
Recall that integral extensions are real residually algebraic, and moreover provide us a real going up property for orderings (see \cite[Proposition 4.3]{ABR}). On the other hand, a central domain $A$ is such that the real spectrum of its fraction field is dense in $\Sp_r A$ (see \cite{FMQ17} for more details). 

\begin{prop}
Let $\phi:A\to B$ be a finite extension between real domains whose induced mapping onto the real spectrum is injective. Assume that $A$ is central. Then, $\phi$ induces a bijection onto $\Sp_r A$.
\end{prop}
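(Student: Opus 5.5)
Since $\Sp_r\phi$ is already injective by hypothesis, it remains to prove surjectivity onto $\Sp_r A$. The strategy is to first get surjectivity over the dense part of $\Sp_r A$ coming from the fraction field, and then extend it to all of $\Sp_r A$ by a closedness argument.

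Let $K=\mathrm{Frac}(A)$ and $L=\mathrm{Frac}(B)$. Since $\phi$ is finite and injective between domains, $K\to L$ is a finite field extension. The residue map is injective on real spectra: an ordering of $L$ is an ordering of $B$ with support $(0)$, and $(0)$ contracts to $(0)$. So $K\to L$ is a finite real radicial extension by Proposition \ref{LiftSpecrBij}. Because $B$ is a real domain, $L$ is formally real. Proposition \ref{FiniteAtMost} then forces the degree to be odd and $K\to L$ to be monoreal. By Theorem \ref{MonoSpecrBij}, every ordering of $K$ lifts to $L$. Hence the image of $\Sp_r B\to\Sp_r A$ contains $\Sp_r K$, viewed as the set of points of $\Sp_r A$ with support $(0)$.

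Since $A$ is central, $\Sp_r K$ is dense in $\Sp_r A$. It therefore suffices to show that the image of $\Sp_r\phi$ is closed. For this I would use that $\Sp_r B$ is quasi-compact and that $\Sp_r\phi$ is continuous. Quasi-compactness alone does not give closedness, because the real spectrum is not Hausdorff. The better route is the real going-up property for integral extensions \cite[Proposition 4.3]{ABR}, which says that specializations lift. The constructible-topology argument then goes as follows. The map $\Sp_r\phi$ is continuous for the constructible topologies, which are compact Hausdorff, so its image is closed in the constructible topology of $\Sp_r A$. A subset that is constructibly closed and stable under specialization is closed in the spectral topology. Going up for orderings gives exactly this specialization stability: if $\alpha=\Sp_r\phi(\beta)$ and $\alpha\rightsquigarrow\alpha'$, then there is $\beta'$ with $\beta\rightsquigarrow\beta'$ lying over $\alpha'$. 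Therefore the image is closed, contains a dense subset, and equals $\Sp_r A$.

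The main obstacle is the closedness step. It needs the precise form of the going-up statement in \cite{ABR}, and it relies on the standard fact that in a spectral space, a proconstructible set closed under specialization is closed. A more hands-on alternative would take $\alpha\in\Sp_r A$ and a generic point $\alpha_0\in\Sp_r K$ specializing to $\alpha$; such a point exists by centrality, via the characterization in \cite{FMQ17}. One would lift $\alpha_0$ to $\beta_0\in\Sp_r L$ and then apply going up along $\alpha_0\rightsquigarrow\alpha$ to get $\beta$ over $\alpha$. This second route avoids the constructible topology but needs centrality in the form that every point is a specialization of a point with support $(0)$. Injectivity, already given, then yields the bijection.
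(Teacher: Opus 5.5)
Your proof is correct and follows the same route as the paper. Injectivity of $\Sp_r L\to\Sp_r K$ and formal reality of $L$ give monoreality via Proposition~\ref{FiniteAtMost}, so $\Sp_r K$ lies in the image, and centrality plus closedness of the image (from the real going-up property of \cite{ABR}) gives surjectivity; your constructible-topology argument for closedness (the image is proconstructible and stable under specialization) just makes explicit the step the paper states in one line.
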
	

\begin{proof}
The extension between the fraction fields $K\to L$ is finite and induces an injection onto the real spectrum. 
Since our fields are formally real, by Proposition \ref{FiniteAtMost}, this extension is monoreal and induces a bijection onto the real spectrum. 

Then, the image of $\Sp_r B$ contains $\Sp_rK$. Since $A$ is central and $\Sp_r \phi$ is closed by the real going up for orderings, we are done.
\end{proof}	

Note that the result is no longer true without the finite assumption, even for algebraic field extensions, cf. Example \ref{ex-root}.


\begin{thebibliography}{ABR2} 



\bibitem{ABR} C. Andradas, L. Br\"ocker, J.M. Ruiz, {\it Constructible sets in real geometry}, Springer (1996)
    



\bibitem{BFMQ} F. Bernard, G. Fichou, J.-P. Monnier, R. Quarez, {\it Algebraic characterizations of homeomorphisms between algebraic varieties}, Math. Z., 308, No. 1, Paper No. 12, 28 p., (2024)



\bibitem{BCR} J. Bochnak, M. Coste, M.-F. Roy, 
{Real algebraic
  geometry}, Springer, (1998)



\bibitem{Cos} M. Coste, 
{\it Real algebraic sets}, in 
Panor. Synthèses, 24 SMF, Paris, 2007





\bibitem{FMQ-subs} G. Fichou, J.-P. Monnier, R. Quarez, {\it Substitution property for the ring of continuous rational functions}, Singularities-Kagoshima 2017, World Scientific (2020) 71-93 



\bibitem{FMQsemi} G. Fichou, J.-P. Monnier, R. Quarez, {\it     Weak and semi normalization in real algebraic geometry},  Ann. Scuola Norm. Sup. Pisa Cl. Sci. (5) 22, no. 3, 1511--1558, (2021)
 
\bibitem{FMQ17} G. Fichou, J.-P. Monnier, R. Quarez, {\it Hilbert 17th property and central prime cones}, Journal of Algebra 664, 552--594 (2025) 

 


\bibitem{Gr1} A. Grothendieck (rédigé avec la collaboration de J. Dieudonné), {\it \'Eléments de géométrie algébrique I: Le langage des schémas}, Publ. Math. IHES 4, (1960)



\bibitem{HM} J. Milnor, D. Husemoller, Symmetric bilinear forms, Springer-Verlag (1974)









\bibitem{P} A. Parusi\'nski, {\it Topology of injective endomorphisms of real algebraic sets}, Math. Annalen, 328
(2004), 353-372



\bibitem{Sch} C. Scheiderer, \textit{A Course in Real Algebraic Geometry - Positivity and Sums of Squares}, Springer, (2024) 





\end{thebibliography}
\end{document}